\newtheorem{theorem}{Theorem}[section] 
\newtheorem{lemma}[theorem]{Lemma}
\newtheorem{proposition}[theorem]{Proposition}
\newtheorem{corollary}[theorem]{Corollary}
\newtheorem{observation}[theorem]{Observation}
\theoremstyle{definition}
\newtheorem{definition}[theorem]{Definition}
\theoremstyle{remark}
\newtheorem{remark}[theorem]{Remark}
\newcommand{\set}{\mathbf{Set}} 
\newcommand{\cat}{\mathbf{Cat}} 
\newcommand{\sset}{\mathbf{sSet}} 
\newcommand{\bisim}{\mathbf{ssSet}} 
\newcommand{\catB}{\mathcal{B}} 
\newcommand{\catC}{\mathcal{C}} 
\newcommand{\catD}{\mathcal{D}} 
\newcommand{\catM}{\mathcal{M}} 
\newcommand{\catE}{\mathcal{E}} 
 \newcommand{\refl}{\textup{\textsf{refl}}} 
\newcommand{\universetyp}{\mathcal{U}} 
\newcommand{\pointtyp}{\textup{\textbf{1}}} 
\newcommand{\isprop}{\textup{\textsf{isProp}}} 
\newcommand{\iscontr}{\textup{\textsf{isContr}}} 
\newcommand{\isequiv}{\textsf{\textup{IsEquiv}}} 
\newcommand{\simplext}{\Delta^1} 
\newcommand{\simplextt}{\Delta^2} 
\newcommand{\horn}{\Lambda_1^2} 
\newcommand{\homtyp}{\textup{\textsf{hom}}} 
\newcommand{\issegal}{\textup{\textsf{isSegal}}} 
\newcommand{\isinner}{\textup{\textsf{isInner}}} 
\newcommand{\isrezk}{\textup{\textsf{isRezk}}} 
\newcommand{\isisoinner}{\textup{\textsf{isIsoinner}}} 
\newcommand{\relfuntyp}{\textup{\textsf{Fun}}_{/B}} 
\newcommand{\iscomp}{\textup{\textsf{isCompletion}}}
\newcommand{\ie}{\textit{i.e. }}
\DeclareMathOperator{\op}{op}
\title{Exponentiable functors between synthetic $\infty$-categories}
\date{}
\author{C\'esar Bardomiano Mart\'inez \\
	Department of Mathematics and Statistics\\
	University of Ottawa \\
	\textit{cbard035@uottawa.ca}}
\begin{document}
      
	\maketitle
	
	\begin{abstract}
          We study exponentiable functors in the context of synthetic $\infty$-categories. We do this within the framework of simplicial Homotopy Type Theory of Riehl and Shulman. Our main result characterizes exponentiable functors. In order to achieve this, we explore Segal type completions. Moreover, we verify that our result is semantically sound.
	\end{abstract}
	

\section{Introduction}

\subsection{Synthetic $\infty$-category theory}

A proposal for a synthetic theory $(\infty,1)$-categories using Homotopy Type Theory appears in the seminal work of Riehl and Shulman \cite{riehlshulman}, called simplicial Homotopy Type Theory or sHoTT for short. They define \textit{Segal} and \textit{Rezk types}, which play the role of $(\infty,1)$-precategories and $(\infty,1)$-categories. The paper develops categorical properties of said types. Also, it studies discrete fibrations and adjunctions. Further work \cite{buchwein} and \cite{ces22} present (co)cartesian fibration and (co)limits, respectively.

The standard semantics of sHoTT is the category of bisimplicial sets $\bisim$ with the Reedy model structure. \cite{riehlshulman} shows that Segal types correspond to Segal spaces, and Rezk types to complete Segal spaces.
Furthermore, the main result in \cite{shulman2019all} implies that if $\catE$ is a Grothendieck $(\infty,1)$-topos, which is in particular a model of Homotopy Type Theory, then we can produce a model of sHoTT in the (internal) category of simplicial presheaves $\catE^{\Delta^{\op}}$ \cite{weinberger2022strict}.

This is the general framework of synthetic category theory in which our work takes place. We study an important class of functors, the \textit{exponentiable} ones, which we introduce shortly after.
This is also a continuation of \cite{ces22}, which started as an exploration on how far we can go in the development of synthetic $(\infty,1)$-category theory without enhancing sHoTT.

\subsection{Exponentiable functors}

An exponential object, or more generally, exponentiable map, can be defined in multiple ways. If $ \catC $ is category with binary products, we say a map $ f:E \to B $ in $ \catC $ is exponentiable if the pullbacks along $f$ exists and the functor $ f^*:\catC/B \to \catC/E $ has a right adjoint $ \prod_f $, so it induces an adjoint triple
\[\begin{tikzcd}
	{\catC/B} && {\catC/E}
	\arrow[""{name=0, anchor=center, inner sep=0}, "{f^*}"{description}, from=1-1, to=1-3]
	\arrow[""{name=1, anchor=center, inner sep=0}, "{\sum_f}"{description}, bend right=50, from=1-3, to=1-1]
	\arrow[""{name=2, anchor=center, inner sep=0}, "{\prod_f}"{description}, bend left=50, from=1-3, to=1-1]
	\arrow["\bot"{description}, draw=none, from=0, to=1]
	\arrow["\bot"{description}, draw=none, from=2, to=0]
\end{tikzcd}\]
where $ \sum_f $ is given by composition with $ f $. More generally, a \textbf{locally cartesian closed category} is a category in which every map is exponentiable. Exponentiable maps in the category of small categories $ \cat $ are also known as Conduch\'e fibrations. The literature on the topic is extensive and they appear for example in \cite{conduche}. In the context of $ \infty $-categories exponentiable functors have been studied in \cite[Lemma 5.16]{ayala2018factorization} and \cite[Appendix B.3]{lurie2}. This result is in the same spirit as \Cref{conducheoriginal} below.

Let us recall the case for categories, for which we first introduce some notation. Given $ f:\catE \to \catB $ a functor and $ a \in \catB $, we denote its fiber as $ \catE_a $. The category $ \catE_a $ has objects $ e\in \catE $ and morphisms $ k:e_1 \to e_2 \in \catE$ such that $ f(e)=a $ and $ f(k)=Id_a $. If $ u:a \to b $ in $ \catB $ and $ x\in \catE_a, \, y\in \catE_b $, then the set of arrows in $ \catE $ over $ u $ with source $ x $ and target $ y $ is denoted as $ hom_\catE^u(x,y)$, whence if $ j\in hom_\catE^u(x,y) $, then $ f(j)=u $. Therefore, we can define a profunctor $ hom_\catE:\catE_a\times \catE_b^{op} \to \set $ in the obvious way. The following statement characterizes exponentiable functors between categories and it is due to Conduch\'e \cite{conduche} and Giraud \cite{Giraud1964},

\begin{theorem}\label{conducheoriginal}
	For a functor $ f:\catE \to \catB $, the following conditions are equivalent:
	\begin{enumerate}
		\item The functor $ f: \catE\to \catB $ is exponentiable,
		\item For all $a,\,b,\,c\in \catB,\, u\in hom_\catB(a,b),\, v\in hom_\catB(b,c),\,x\in \catE_a,\,z\in \catE_c$, the induced map
		\[\left(\int_{}^{y\in \catE_b}hom_\catE^u(x,y)\times hom_\catE^v(y,z)\right) \to hom_\catE^{v\circ u}(x,z)\]
		is an isomorphism.
	\end{enumerate}
\end{theorem}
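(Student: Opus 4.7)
The plan is to prove the two directions of the equivalence separately.

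For $(1) \Rightarrow (2)$, suppose $f^*$ admits the right adjoint $\prod_f$. Fix a composable pair $u: a \to b$, $v: b \to c$ in $\catB$ together with $x \in \catE_a$ and $z \in \catE_c$. I would apply the adjunction to the slice $\catE/z$ considered as an object of $\catE/\catE$: for any $\phi$ in $\catB/\catB$,
\[
\hom_{\catB/\catB}\bigl(\phi,\, \prod_f(\catE/z)\bigr) \cong \hom_{\catE/\catE}\bigl(\phi \times_\catB \catE,\, \catE/z\bigr).
\]
Taking $\phi$ to be the composite arrow $vu: a \to c$ (pointed at $x \in \catE_a$) recovers $\hom_\catE^{vu}(x,z)$ on the right. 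Expanding the left side by computing $\prod_f$ fiberwise produces a coend over $\catE_b$ of the form appearing in (2), and matching the two sides yields the desired isomorphism.

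For $(2) \Rightarrow (1)$, I would construct $\prod_f$ explicitly. Given $g: \catC \to \catE$ over $\catE$, define $\prod_f(\catC)$ so that an object over $b \in \catB$ is a section of the restricted projection $g|_{\catE_b}: \catC_b \to \catE_b$, and a morphism over $u: a \to b$ from a section $\sigma$ to a section $\tau$ is a natural family of $\catC$-arrows $\sigma(x) \to \tau(y)$, one for each $k: x \to y$ in $\catE$ with $f(k) = u$, lying over $k$. Composition over composable $u$ and $v$ is prescribed by the coend isomorphism of (2): transport data over $u$ paired with transport data over $v$, varying over intermediate $y \in \catE_b$, projects uniquely to transport data over $vu$. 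The adjunction $f^* \dashv \prod_f$ is then checked against the universal property of the pullback $\catC \times_\catE \catE_b$.

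The principal obstacle is proving that this composition in $\prod_f(\catC)$ is associative. Given a composable triple $u, v, w$ in $\catB$, the two iterated applications of the coend contraction must agree, which reduces to a coherence statement on the $3$-simplex obtained by applying (2) to each of the two bracketings and matching them via the naturality of the coend formula in the intermediate objects. Once associativity is established, functoriality of $\prod_f$ and the unit/counit verifications are straightforward consequences of the universal properties involved.
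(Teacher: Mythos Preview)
The paper does not prove this theorem. It is stated in the introduction as a classical result, attributed to Conduch\'e and Giraud with citations, and serves only as motivation for the paper's main theorem (\Cref{conduche}) in the synthetic setting. There is therefore no proof in the paper to compare your proposal against.

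On its own merits, your outline is in the right spirit for $(2)\Rightarrow(1)$: building $\prod_f(\catC)$ with sections over fibers as objects and compatible families of lifts as morphisms is the standard construction, and you correctly identify associativity over a composable triple as the nontrivial point where the coend condition does the work. Your sketch of $(1)\Rightarrow(2)$ is considerably vaguer. Writing $\catB/\catB$ and $\catE/\catE$ is presumably a slip for $\cat/\catB$ and $\cat/\catE$, but even granting that, the step ``expanding the left side by computing $\prod_f$ fiberwise produces a coend over $\catE_b$'' is exactly the content to be proved and cannot be asserted without an argument; the usual route is to test the adjunction against the walking factorization $[2]\to\catB$ (equivalently, show that $f^*$ preserves the pushout $[1]\sqcup_{[0]}[1]\simeq[2]$), from which the coend description of $\hom_\catE^{vu}(x,z)$ falls out. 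Also, the phrase ``the composite arrow $vu:a\to c$ (pointed at $x\in\catE_a$)'' does not describe a well-formed object of $\cat/\catB$, so that test object needs to be specified more carefully.
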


The result we prove in \Cref{conduche} can be seen as analogous to the previous statement, while partially recovering the result in \cite{ayala2018factorization} which is the $ \infty $-categorical statement of \Cref{conducheoriginal}. Partial results about exponentiable functors also appear in \cite[Appendix B.3]{lurie2}. Condition 2 from \Cref{conducheoriginal} states that the composition of the $ hom$-profunctors is given by the coend formula where the map is induced by composition. This is also what \cite{ayala2018factorization} prove in their result, which they do using the language of \textit{correspondences}. In \Cref{profunctors} we explain how this is reflected in our \Cref{conduche}.

\subsection{Outline}

To define \textit{exponentiable functors}, throughout \Cref{universalsegal}, we study \textit{Segal type completions}. This notion is essential to correctly formulate Condition 5 in \Cref{conduche}. This is exactly what we should think of as the composition of profunctors. In \Cref{conduchesection} we present \Cref{conduche}, which is the characterization of \textit{exponentiable functors} between Segal types. We then specialize this result to Rezk types in \Cref{conduchecorollary}.

Finally, in \Cref{semantics} we verify that our definitions are consistent with the semantics. The procedure we follow is first interpret our type-theoretic definition in the standard semantics, bisimplicial sets, and then verify that the resulting statement is equivalent to the existing definition. We do this for the Segal type completion in \Cref{semanticscomparisonus}. Finally, in \Cref{exponentialsemantics} we verify that our definition of exponentiable functor is semantically sound. 

\begin{par}
	\textbf{Acknowledgement.} The author wants to thank his PhD supervisor for his insights, comments and suggestions for the develoment of this work.\\
	The author also acknowledges the support of the Natural Sciences and Engineering Research Council of Canada (NSERC), under the grant, reference number RGPIN-2020-06779, awarded to Simon Henry. The author is grateful for the support granted by the Department of Mathematics and Statistics of the University of Ottawa.
\end{par}



\section{The Segal type completion} \label{universalsegal}

The notion we study in this section is essential to correctly formulate condition 5 in \Cref{conduche}. We start by establishing some basic definitions and mention some properties.

\begin{definition}
	A \textbf{Segal type completion} for a type $ A $ consist of a Segal type $S$ and a map $ \iota:A \to S $ such that for any Segal type $ X $ the map
	
	\begin{equation} \label{isuniversal}
		\iota^* :\equiv \_\circ\iota:(S\to X)\to (A\to X)
	\end{equation}
is an equivalence \ie if
	\[
	\iscomp^A(S,\iota):\equiv  \issegal(S)\times\left(\prod_{X:\universetyp}\issegal(X)\to\isequiv(\iota^*)\right).
	\]
\end{definition}

It is immediate to see that whenever $ (S,\iota) $ exists then it is unique up to equivalence.

\begin{definition}
	Let $ A $ a type. We define
	\[
	\textsf{\textup{Completion}}(A):\equiv  \sum_{S:\universetyp}\sum_{\iota:A\to S}\iscomp^A(S,\iota).
	\]
\end{definition}

The following result is a direct consequence of the Univalence Axiom:
\[\isprop(\textsf{\textup{Completion}}(A)).\]
We call the Segal type in this proposition the \textbf{Segal type completion} of the type $ A $. In the Segal space model structure on $ \bisim $ this corresponds to the fibrant replacement of a Reedy fibrant bisimplicial set by a Segal space, see \Cref{completionfibrant}. We can define in the same way the \textbf{Rezk type completion} of a type. The results that we get for Segal type completions are also true for Rezk type completions. We formally state the relative version of Rezk completions in \Cref{rezktypecompletion}.

 It is more convenient to work with Segal type completions at an informal level, a trade-off between clarity and formality. We favor this approach as it is more intuitive and it reflects the nature the semantical counterpart of these types. The equivalence (\ref{isuniversal}) tells us that the fibers are contractible; for any $\psi:A\to X$ we have
\[\iscontr\left(\sum_{\varphi:S\to X}\varphi\circ \iota=\psi\right).\]
Unfolding the above means that for any $\psi:A\to X$, where $X$ is a Segal type, there exists a unique $\varphi:S\to X$ such that $\varphi\circ \iota=\psi$. We can put this pictorially by saying that any $\psi:A\to X$ factors uniquely through $\iota$ as in the diagram below:
\[\begin{tikzcd}
	A & S \\
	& X.
	\arrow["\iota", from=1-1, to=1-2]
	\arrow["\psi"', from=1-1, to=2-2]
	\arrow["\varphi", dashed, from=1-2, to=2-2]
\end{tikzcd}\]
We will often refer only to the Segal space $S$ and assume that the map $\iota:A \to S$ is given and available for use. By uniqueness in the above we simply mean that the type \[ \iscontr\left(\sum_{\varphi:S \to X} \psi \sim \varphi \circ \iota \right) \]
is inhabited. The homotopy in the center of contraction is omitted most of the time and we only make reference to the function. We carry this convention forward below.

We can also consider a relative version of this universal property. For the time being fix a Segal type $B$.  We use the following notation:
\[
\universetyp /B :\equiv \sum_{S:\universetyp}S\to B.
\]
Furthermore, we will refer to an element $(S, \phi)$ of this type by leaving implicit the type $S$ and mentioning that we have a map of type $S \to B$. 

Recall from \cite{buchwein} that the \textbf{relative function type} between functions $\pi:A\to B$ and $\xi:E\to B$ is given by the pullback diagram:
\[\begin{tikzcd}
	{\relfuntyp(A,E)} & {E^A} \\
	{\mathbf{1}} & {B^A.}
	\arrow["{\xi^A}", from=1-2, to=2-2]
	\arrow["\pi"', from=2-1, to=2-2]
	\arrow[from=1-1, to=2-1]
	\arrow[from=1-1, to=1-2]
	\arrow["\lrcorner"{anchor=center, pos=0.125}, draw=none, from=1-1, to=2-2]
\end{tikzcd}\]
Note that if we assume further that $E$ is a Segal type, and $A$ is a type or shape, then $\relfuntyp(A,E)$ is a Segal type. An element $\iota: \relfuntyp(A,E)$ is a function making the following diagram commute:

\[\begin{tikzcd}
	A & E \\
	& B.
	\arrow["\pi"', from=1-1, to=2-2]
	\arrow["\xi", from=1-2, to=2-2]
	\arrow["\iota", from=1-1, to=1-2]
\end{tikzcd}\]
Thus, we call the elements of $\relfuntyp(A,E)$ \textbf{functions over} $B$.

\begin{definition} \label{segaltypecompletion}
	Let $ A\to B $ a type over $ B $. A \textbf{relative Segal type completion} for $ A \to B $ consist of a Segal type over $B$, $S \to B$ and a map $ \iota:\relfuntyp(A,S) $ such that for any Segal type $X$ over $B$ the map
	
	\begin{equation} \label{isuniversal_rel}
		\iota_{/B}^*:\relfuntyp(S,X)\to \relfuntyp(A,X)
	\end{equation}
	
	is an equivalence \ie if
	\[
	\iscomp_{/B}^A(S,\iota):\equiv  \issegal(S)\times \left( \prod_{X:\universetyp/B}\issegal(X)\to\isequiv(\iota_{/B}^*)\right).
	\]
\end{definition}

It is immediate to see that whenever $ S\to B $ exists then it is unique up to equivalence. Following \Cref{segaltypecompletion} we state:
\begin{definition} \label{rezktypecompletion}
	Let $ A\to B $ a type over $ B $. A \textbf{relative Rezk type completion} for $ A \to B $ consist of a Rezk type over $B$, $R \to B$ and a map $ \iota:\relfuntyp(A,R) $ such that for any Rezk type $X$ over $B$ the map
	
	\begin{equation} \label{isuniversal_rel}
		\iota_{/B}^*:\relfuntyp(R,X)\to \relfuntyp(A,X)
	\end{equation}
	
	is an equivalence \ie if
	\[
	\textsf{\textup{isRCompletion}}_{/B}^A(S,\iota):\equiv  \isrezk(R)\times \left( \prod_{X:\universetyp/B}\isrezk(X)\to\isequiv(\iota_{/B}^*)\right).
	\]
      \end{definition}

      It is again immediate to see that whenever $ R\to B $ exists then it is unique up to equivalence.

\begin{definition}
	Let $ A \to B$ a type over $ B $. We define
	\[
	\textsf{\textup{Completion}}_{/B}(A):\equiv  \sum_{S:\universetyp/B}\sum_{ \iota:\relfuntyp(A,S) }\iscomp_{/B}^A(S,\iota).
	\]
\end{definition}

We will say often that $S$ is a Segal type completion relative to the type $B$ leaving implicit the map $\xi:S\to B$, and that $\iota:\relfuntyp(A,S)$ exists. The equivalence (\ref{isuniversal_rel}) tells us that the fibers are contractible, for any $\psi:\relfuntyp(A,X)$ we have
\[\iscontr\left(\sum_{\varphi:\relfuntyp(S,X)}\varphi\circ \iota=\psi\right).\]
Just as we did before, unfolding the above means that for any $\psi:\relfuntyp(A,X)$, where $\delta:X\to B$ is a map between Segal types, there exists a unique $\varphi:\relfuntyp(S,X)$ such that $\varphi\circ \iota=\psi$. The picture for this situation is the commutative diagram below:
\[\begin{tikzcd}
	A && X \\
	& S \\
	& B.
	\arrow["\pi"', from=1-1, to=3-2]
	\arrow["\xi"{description}, from=2-2, to=3-2]
	\arrow["\iota", from=1-1, to=2-2]
	\arrow["\psi", from=1-1, to=1-3]
	\arrow["\delta", from=1-3, to=3-2]
	\arrow["\varphi", dashed, from=2-2, to=1-3]
\end{tikzcd}\]

For the next section it will be useful to know that for a type $A$ its associated Segal type completion $S$ is also universal relative to any Segal type $B$, and vice versa. Informally, the categorical interpretation we give to this is that having the Segal type completion over the single point Segal space is equivalent to having it over any slice (by a Segal space). This is the content of:

\begin{proposition} \label{relunivsegaltype}
	Let $A$ be any type and $ B $ a Segal type. Assume further we have a commutative diagram
	\[\begin{tikzcd}
		A && S \\
		& B.
		\arrow["\pi"', from=1-1, to=2-2]
		\arrow["\xi", from=1-3, to=2-2]
		\arrow["\iota", from=1-1, to=1-3]
	\end{tikzcd}\]
	where $ S $ is a Segal type. Then \[\iscomp(S,\iota) \simeq \iscomp_{/B}^A(S,\iota).\]
\end{proposition}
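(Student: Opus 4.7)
My plan is to unwind both sides of the claimed equivalence and exhibit mutual implications. Both $\iscomp(S, \iota)$ and $\iscomp_{/B}^A(S, \iota)$ are propositions, being products of propositions, so it suffices to give a logical equivalence; and since both contain $\issegal(S)$ as a common factor, I focus only on the universal-property components.

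For the forward direction I assume the absolute universal property of $(S, \iota)$ and fix a Segal type $X$ equipped with a structural map $\delta : X \to B$. I must show that for every $\psi : \relfuntyp(A, X)$ the space of $\varphi : \relfuntyp(S, X)$ with $\varphi \circ \iota \sim \psi$ (together with matching $B$-data) is contractible. Unfolding the pullback defining $\relfuntyp$, an element $\psi$ amounts to a map $\psi_0 : A \to X$ together with a homotopy $\delta \circ \psi_0 \sim \pi$. Applying the absolute completion to $\psi_0$ gives a unique $\varphi_0 : S \to X$ with $\varphi_0 \circ \iota \sim \psi_0$. To promote $\varphi_0$ to a map over $B$, I compare $\delta \circ \varphi_0$ with $\xi$ as two maps $S \to B$: both precompose with $\iota$ to a map homotopic to $\pi$. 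Since $B$ is itself Segal, the uniqueness clause of the absolute universal property applied to $\pi : A \to B$ and to the Segal type $B$ forces $\delta \circ \varphi_0 \sim \xi$ canonically, and contractibility of the corresponding fiber upgrades this to contractibility of the fiber of $\iota_{/B}^*$ over $\psi$.

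For the reverse direction I assume the relative universal property over $B$. Given an arbitrary Segal type $X$, I form $X \times B$ and view it as a Segal type over $B$ through the projection $\projtt : X \times B \to B$. The universal property of the product supplies canonical equivalences
\[
\relfuntyp(A, X \times B) \simeq (A \to X), \qquad \relfuntyp(S, X \times B) \simeq (S \to X),
\]
which intertwine $\iota_{/B}^*$ with the absolute restriction map $\iota^*$. Instantiating the relative universal property at $X \times B$ therefore shows that $\iota^*$ is an equivalence, as required.

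The main technical obstacle is the bookkeeping in the forward direction: one must see that, for a map landing in a Segal type over $B$, the extra datum of being a map over $B$ is propositional, so that the forgetful projection $\relfuntyp(-, X) \to (- \to X)$ interacts transparently with the absolute universal property. This is exactly where the hypothesis that $B$ is itself Segal is used essentially, since it lets us invoke uniqueness for the triangle-filler $\delta \circ \varphi_0 \sim \xi$ against the map $\pi : A \to B$; without it, one would only recover $\varphi_0$ as a plain map and not canonically as a map over $B$.
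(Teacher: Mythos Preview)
Your proposal is correct and follows essentially the same strategy as the paper: in the forward direction you invoke the absolute universal property once for $X$ and once for the Segal type $B$ to pin down the over-$B$ datum, and in the reverse direction you pass through $X\times B$ with its second projection to $B$; these are exactly the two moves the paper makes. Your remark that both sides are propositions (so a logical biimplication suffices) and your packaging of the reverse direction via the equivalences $\relfuntyp(A,X\times B)\simeq (A\to X)$ and $\relfuntyp(S,X\times B)\simeq (S\to X)$ are mild streamlinings of the paper's more explicit element-chasing, but the underlying argument is the same.
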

\begin{proof}
	Since $ S $ is a Segal type it is enough to show that \[ \left(\prod_{X:\universetyp}\issegal(X)\to\isequiv(\iota^*)\right) \simeq \left(\prod_{X:\universetyp/B}\issegal(X)\to\isequiv(\iota_{/B}^*)\right).\]
	It is possible to construct a function \[\varphi:\left(\prod_{X:\universetyp}\issegal(X)\to\isequiv(\iota^*)\right) \to \left(\prod_{X:\universetyp/B}\issegal(X)\to\isequiv(\iota_{/B}^*)\right)\]
	after some preliminary observations we now make.
        
	Let $\delta:X\to B$, where $ X $ a Segal type, and $j:\relfuntyp(A,X)$. We wish to see that
	\[ \iscontr\left(\textsf{fib}_{\iota_{/B}^*}(j)\right). \]
	By assumption we have the equivalence $ \iota^*:X^S \to X^A $, this give us unique functions $h:S\to B$ and $g:S\to X$ making the following diagrams commutative:
	\[\begin{tikzcd}
		A & S & A & S \\
		& X && B.
		\arrow["j"', from=1-1, to=2-2]
		\arrow["\iota", from=1-1, to=1-2]
		\arrow["g", dashed, from=1-2, to=2-2]
		\arrow["\pi"', from=1-3, to=2-4]
		\arrow["\iota", from=1-3, to=1-4]
		\arrow["h", dashed, from=1-4, to=2-4]
	\end{tikzcd}\]
	Since $j:\relfuntyp(A,X)$ then by uniqueness of $h:S\to B$ we must have $h=\delta \circ g$. It is now clear that $g$ must be unique with this property. All together fit in the diagram:
	\[\begin{tikzcd}
		A && S \\
		& X \\
		& B.
		\arrow["\pi"', from=1-1, to=3-2]
		\arrow["h", from=1-3, to=3-2]
		\arrow["\iota", from=1-1, to=1-3]
		\arrow["j", from=1-1, to=2-2]
		\arrow["\delta"{description}, from=2-2, to=3-2]
		\arrow["g"{description}, dashed, from=1-3, to=2-2]
	\end{tikzcd}\]
      If we were to unfold $ \isequiv(\iota_{B}^*) $ and $ \isequiv(\iota_{/B}^*) $ we would be able to give an explicit formula for $ \varphi $, then the above shows that $ \varphi $ is well-defined. We will not do this since it does not gives any more clarity on the result.
      
    Likewise, it is possible to construct
    \[\psi:\left(\prod_{X:\universetyp/B}\issegal(X)\to\isequiv(\iota_{/B}^*)\right) \to \left(\prod_{X:\universetyp}\issegal(X)\to\isequiv(\iota^*)\right).\]
     Let $j:A\to X$ be a function where $X$ is a Segal type, now we want to observe that \[\iscontr\left( \textsf{fib}_{i^*}(j) \right). \]
     Consider $j\times \mathrm{Id}_B:A\times B\to X\times B$. We construct the commutative diagram:
	\[\begin{tikzcd}
		A && {A\times B} & {} & {X\times B} \\
		&& B.
		\arrow["\pi"', from=1-1, to=2-3]
		\arrow["{p_2}"{description}, from=1-3, to=2-3]
		\arrow["{(\mathrm{Id}_A,\pi)}", from=1-1, to=1-3]
		\arrow["{j\times \mathrm{Id}_B}", from=1-3, to=1-5]
		\arrow["{p_2}", from=1-5, to=2-3]
	\end{tikzcd}\]
	This implies that $(j,\pi):\relfuntyp(A,X\times B)$. Since $S$ is the Segal type completion of $ A $ relative to $B$ there exists a unique function $f:S\to X\times B$ making the following diagram commutative:
	\[\begin{tikzcd}
		A && {X\times B} \\
		& S \\
		& B.
		\arrow["\pi"', from=1-1, to=3-2]
		\arrow["{p_2}", from=1-3, to=3-2]
		\arrow["{(j,\pi)}"{description}, from=1-1, to=1-3]
		\arrow["\iota", from=1-1, to=2-2]
		\arrow["\xi"{description}, from=2-2, to=3-2]
		\arrow["f"{description}, dashed, from=2-2, to=1-3]
	\end{tikzcd}\]
	From this we obtain:
	\[\begin{tikzcd}
		A & S \\
		& X.
		\arrow["j"', from=1-1, to=2-2]
		\arrow["\iota", from=1-1, to=1-2]
		\arrow["{p_1\circ f}", dashed, from=1-2, to=2-2]
	\end{tikzcd}\]
	To show uniqueness, if we had a map $g:S\to X$ fitting in the triangle above then we certainly get:
	\[\begin{tikzcd}
		A && {X\times B} \\
		& S \\
		& B.
		\arrow["\pi"', from=1-1, to=3-2]
		\arrow["{p_2}", from=1-3, to=3-2]
		\arrow["{(j,\pi)}"{description}, from=1-1, to=1-3]
		\arrow["\iota", from=1-1, to=2-2]
		\arrow["\xi"{description}, from=2-2, to=3-2]
		\arrow["{(g,\xi)}"{description}, from=2-2, to=1-3]
	\end{tikzcd}\]
	Uniqueness implies that $f=(g,\xi)$, from which we finally conclude $p_1\circ f= g$.\\
	After obtaining $ \varphi $ and $ \psi $ we can then show that $ \varphi \circ \psi $ and $ \psi \circ \varphi $ are homotopic the corresponding identities. This follows using the universality of each completion.
\end{proof}

\section{Conduch\'e's theorem} \label{conduchesection}

The next theorem characterizes and at the same time allows to define exponentiable functors between Segal types. As we mentioned in the introduction, this result is analogous to the one given by Ayala, Francis and Rozenblyum in \cite[Lemma 5.16]{ayala2018factorization} for quasi-categories. Its proof is the main focus of this section.

Recall from \cite{buchwein} that a type family $ Q:B\to \universetyp $ is an \textbf{inner family} if
\[
 \isinner(Q):\equiv \prod_{\alpha:\simplextt \to B }\prod_{\delta:\prod_{t:\horn}Q(\alpha(i(t)))} \iscontr\left(\left<\prod_{t:\simplextt}Q(\alpha(t))\middle|_{\delta}^{\horn} \right> \right).
\]

The significance of inner families can be understood via the following result which can be found in \cite[Proposition 4.1.5 and 4.1.6]{buchwein}:

\begin{proposition} \label{mapsbetweensegal}
	Let $ Q:B \to \universetyp $ be a type family over a Segal type $ B $. Then $ \issegal\left(\sum_{b:B}Q(b)\right) $ if and only if $ \isinner(Q) $.
\end{proposition}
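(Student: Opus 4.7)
The plan is to unfold $\issegal\left(\sum_{b:B}Q(b)\right)$ directly and reduce it to $\isinner(Q)$ via the universal property of $\Sigma$-types together with the Segal condition on $B$. First I would invoke the equivalence
\[
\left(\Phi \to \sum_{b:B}Q(b)\right) \simeq \sum_{\alpha:\Phi\to B}\,\prod_{t:\Phi} Q(\alpha(t)),
\]
natural in the shape $\Phi$, applied to $\Phi = \horn$ and $\Phi = \simplextt$. Because these equivalences are compatible with restriction along $\horn\hookrightarrow\simplextt$, a horn $f$ in the total space corresponds to a pair $(\alpha,\delta)$ with $\alpha:\horn\to B$ and $\delta:\prod_{t:\horn}Q(\alpha(t))$, and its extension type rewrites as
\[
\left<\prod_{t:\simplextt}\sum_{b:B}Q(b)\,\middle|_{f}^{\horn}\right> \simeq \sum_{\tilde\alpha:\left<\simplextt\to B\,\middle|_{\alpha}^{\horn}\right>}\left<\prod_{t:\simplextt}Q(\tilde\alpha(t))\,\middle|_{\delta}^{\horn}\right>.
\]

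Next, I would exploit the Segal condition on $B$: it makes the outer $\Sigma$-index contractible, with centre the unique Segal extension $\tilde\alpha$ of $\alpha$. By the standard fact that a $\Sigma$-type over a contractible base is contractible iff its fibre over the centre is contractible, the displayed $\Sigma$-type is contractible iff the inner extension type $\left<\prod_{t:\simplextt}Q(\tilde\alpha(t))\,\middle|_{\delta}^{\horn}\right>$ is. Hence $\issegal\left(\sum_{b:B}Q(b)\right)$ is equivalent to requiring, for every $\alpha:\horn\to B$ with unique Segal extension $\tilde\alpha$ and every $\delta:\prod_{t:\horn}Q(\alpha(t))$, the contractibility of that fibre extension type. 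Since $B$ is Segal, restriction along $\horn\hookrightarrow\simplextt$ is already an equivalence of function types, so this quantification can be reparameterized directly as ``for every $\alpha:\simplextt\to B$ and every $\delta:\prod_{t:\horn}Q(\alpha(i(t)))$'', which is precisely $\isinner(Q)$.

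The step I expect to require the most care is the first one: verifying that the $\Pi$-$\Sigma$ distributivity interacts cleanly with the extension-type bracket $\left<\,\cdot\,\middle|_{-}^{-}\right>$ and with restriction along the horn inclusion, so that the bracketed type on the left genuinely decomposes as the indicated $\Sigma$ of brackets. Semantically this is obvious, but formulating it precisely in the type theory hinges on the $\Sigma$-universal property for dependent functions out of shapes. Once this bookkeeping is in place, the remainder is a clean combination of the Segal hypothesis on $B$ with the $\Sigma$-contractibility lemma, and both directions of the biconditional fall out simultaneously.
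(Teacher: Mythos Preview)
Your argument is correct. Note, however, that the paper does not supply its own proof of this proposition: it is stated with an explicit citation to \cite[Proposition 4.1.5 and 4.1.6]{buchwein}, followed only by an informal explanation recasting the statement as a unique-lifting condition for the square
\[\begin{tikzcd}
	{\Lambda_1^2} & \tilde{Q} \\
	{\Delta^2} & B.
	\arrow[from=1-1, to=1-2]
	\arrow[from=1-1, to=2-1]
	\arrow[from=1-2, to=2-2]
	\arrow[dashed, from=2-1, to=1-2]
	\arrow[from=2-1, to=2-2]
\end{tikzcd}\]
So there is no paper-proof to compare against in detail.

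That said, what you have written is essentially the proof one finds in the cited reference: decompose maps out of a shape into $\sum_{b:B}Q(b)$ via $\Pi$--$\Sigma$ distributivity, observe that this decomposition respects restriction along $\horn\hookrightarrow\simplextt$ and hence passes to extension types, use the Segal hypothesis on $B$ to contract the base of the resulting $\Sigma$-type, and then reindex the outer quantifier from $\horn\to B$ to $\simplextt\to B$ along that same equivalence. Your identification of the delicate step---that the equivalence $\bigl(\Phi\to\sum_{b:B}Q(b)\bigr)\simeq\sum_{\alpha}\prod_t Q(\alpha(t))$ is strictly compatible with the extension-type bracket---is apt; in sHoTT this is justified by the judgmental computation rules for extension types (cf.\ \cite[Theorem~4.2]{riehlshulman} and the axioms used throughout \cite{buchwein}), so no gap remains.
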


Let us explain the proposition above. Firstly, given a type $A$ we can think of it as a type family over $\pointtyp$ \ie $\lambda *.A:\pointtyp \to \universetyp$. This give us a function $\pi:A \to \pointtyp$. Then the proposition above says that $A$ is a Segal type if and only the diagram
\[\begin{tikzcd}
	{\Lambda_1^2} & A \\
	{\Delta^2} & \pointtyp
	\arrow[from=1-1, to=1-2]
	\arrow[from=1-1, to=2-1]
	\arrow["\pi", from=1-2, to=2-2]
	\arrow[dashed, from=2-1, to=1-2]
	\arrow[from=2-1, to=2-2]
      \end{tikzcd}\]
    has a diagonal filler which is unique up to homotopy. \Cref{mapsbetweensegal} establishes a relative version of this condition. If $Q:B \to \universetyp$ is a type family over the Segal type $B$ and $\pi:\tilde{Q} \to B$ is the cannonical projection from its total type, then \Cref{mapsbetweensegal} can be rephrased by saying that the diagram
    \[\begin{tikzcd}
	{\Lambda_1^2} & \tilde{Q} \\
	{\Delta^2} & B
	\arrow["\delta",from=1-1, to=1-2]
	\arrow[from=1-1, to=2-1]
	\arrow["\pi", from=1-2, to=2-2]
	\arrow[dashed, from=2-1, to=1-2]
	\arrow["\alpha"',from=2-1, to=2-2]
      \end{tikzcd}\]
    has a diagonal filler which is unique up to homotopy, see \cite[Observation 2.4.1]{buchwein}. We proceed to prove the main result of this section.

\begin{theorem} \label{conduche}
	Let $E:B \to \universetyp$ an inner family over a Segal type $ B $, the following are equivalent:
	
	\begin{enumerate}
		\item For any inner family $P:B \to \universetyp$,
		\[\issegal\left(\sum_{b:B}\left(E(b)\to P(b)\right)\right).\]
		
		\item For any inner family $P:B \to \universetyp $, the type family $Q :\equiv  \lambda b. (E(b)\to P(b)):B\to \universetyp$ is inner.
		
		\item For any inner family $P:E\to \universetyp $, the type family $Q :\equiv  \lambda b. \prod_{e:E(b)}P(e) :B\to \universetyp$ is inner.
		
		\item For any Segal type $X$, the type family $Q :\equiv  \lambda b.E(b)\to X:B\to \universetyp$ is inner.
		
		\item \label{coduche5} For any map $ \alpha:\simplextt \to B $, together with the inclusion $ i:\horn \to \simplextt $. Let $ F_1:\equiv  \sum_{t:\simplextt} E(\alpha(t)) $ and $ F_2:\equiv  \sum_{t:\horn} E(\alpha(i(t))) $ then
		\[
		 \textsf{\textup{isCompletion}}_{/B}^{F_2}(F_1,\iota),
		\]
		where $ \iota :\equiv  \lambda(t,e).(i(t),e) \ : F_2 \to F_1 $.
	\end{enumerate}
\end{theorem}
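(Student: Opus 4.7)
The plan is to organize the proof around condition (5), which I treat as a hub through which the other conditions are related. The equivalence $(1)\Leftrightarrow(2)$ is immediate from \Cref{mapsbetweensegal} applied to the family $\lambda b.(E(b)\to P(b)):B\to\universetyp$. Both $(2)\Rightarrow(4)$ and $(3)\Rightarrow(4)$ are obtained by specializing $P$ to the constant family at a Segal type $X$ (over $B$ and over $E$ respectively), after first checking that the constant family at a Segal type is inner and noting the canonical equivalence $\prod_{e:E(b)}X\simeq (E(b)\to X)$.

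For the core equivalence $(4)\Leftrightarrow(5)$, I unfold innerness of the family $\lambda b.(E(b)\to X)$. This asks that for every $\alpha:\simplextt\to B$, restriction along $i:\horn\to\simplextt$ induces an equivalence between $\prod_{t:\simplextt}(E(\alpha(t))\to X)$ and $\prod_{t:\horn}(E(\alpha(i(t)))\to X)$. By currying, these types are exactly $(F_1\to X)$ and $(F_2\to X)$, so (4) asserts precisely that $F_1$ together with $\iota$ is the absolute Segal type completion of $F_2$. Since $F_1$ is itself Segal (as a pullback of the Segal type $\sum_{b:B}E(b)$ along $\alpha:\simplextt\to B$), \Cref{relunivsegaltype} lets me pass freely between this absolute statement and the relative statement (5).

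It remains to derive (2) and (3) from (5). For any inner family $P$ the total type $\tilde P$ is Segal by \Cref{mapsbetweensegal}. In case (2), where $P:B\to\universetyp$, a horn section of $\lambda b.(E(b)\to P(b))$ over $\alpha$ is the same data as a map $F_2\to\tilde P$ over $B$, and (5) supplies the required unique extension $F_1\to\tilde P$ over $B$. Case (3) is where the real bookkeeping lies: $P$ now lives over $E$ rather than $B$, so the desired extension must be over $E$. I first apply (5) to $\tilde P$ viewed as a Segal type over $B$ via the composite $\tilde P\to E\to B$ to obtain a unique extension over $B$, and then invoke the completion property a second time with target $E$: the projection $F_1\to E$ and the composite $F_1\to\tilde P\to E$ agree on $F_2$, so uniqueness forces them to coincide and the extension is automatically over $E$. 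This base-change argument is the only genuine obstacle; the rest of the proof is a faithful translation between innerness and the Segal completion universal property.
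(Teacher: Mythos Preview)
Your proposal is correct and takes a genuinely different route from the paper's proof.

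The paper argues as follows: $(1)\Leftrightarrow(2)$ via \Cref{mapsbetweensegal} (same as you), then $(2)\Leftrightarrow(3)\Leftrightarrow(4)$ is dispatched by citing a classical external reference (Niefield), and the bulk of the work is a direct proof of $(1)\Rightarrow(5)$ and $(5)\Rightarrow(1)$, in which the passage between an element of $\relfuntyp(F_i,\tilde P)$ and a dependent section $\prod_{t}(E(\alpha(t))\to P(\alpha(t)))$ is carried out explicitly with transport along the witnessing homotopies. In particular, \Cref{relunivsegaltype} is proved earlier in the paper but is never invoked in the proof of the theorem; the relative universal property in (5) is established by hand.

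Your organization is tighter in two ways. First, making $(4)\Leftrightarrow(5)$ the pivot and reading it off from currying $\prod_{t:\simplextt}(E(\alpha(t))\to X)\simeq (F_1\to X)$ together with \Cref{relunivsegaltype} is precisely what that proposition was designed for, and it replaces the paper's long $(1)\Leftrightarrow(5)$ calculation by a short reduction. Second, you supply a self-contained synthetic proof of $(5)\Rightarrow(3)$ via the two-step base-change (extend over $B$, then use uniqueness with target $E$ to upgrade the extension to one over $E$), whereas the paper outsources the $(2)\Leftrightarrow(3)\Leftrightarrow(4)$ block to the literature. The trade-off is that the paper's transport calculations make visible the identification $\relfuntyp(F_i,\tilde P)\simeq \prod_{t}(E(\alpha(t))\to P(\alpha(t)))$ and its compatibility with restriction along $\iota$; you use this identification tacitly, which is legitimate but is the one place a reader might want an extra sentence. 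The verification that $F_1$ is Segal is glossed over in both arguments.
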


Before proceeding with the proof we give some motivations for the conditions of the theorem. If a map $ f:E \to B\in \catC $ is exponentiable then we have a triple adjunction
\[\begin{tikzcd}
	{\catC/B} && {\catC/E}.
	\arrow[""{name=0, anchor=center, inner sep=0}, "{f^*}"{description}, from=1-1, to=1-3]
	\arrow[""{name=1, anchor=center, inner sep=0}, "{\sum_f}"{description}, bend right=50, from=1-3, to=1-1]
	\arrow[""{name=2, anchor=center, inner sep=0}, "{\prod_f}"{description}, bend left=50, from=1-3, to=1-1]
	\arrow["\bot"{description}, draw=none, from=0, to=1]
	\arrow["\bot"{description}, draw=none, from=2, to=0]
\end{tikzcd}\]
In this situation, the internal hom in the slice $ \catC/B $ is given by $$ [f:E \to B,-]_{/B}:\equiv  \prod_f \circ f^* .$$ In general, this formula is the semantic interpretation of the type family in (2) of \Cref{conduche}, $ \lambda b. E(b)\to P(b)$. In our framework of sHoTT, the extra condition of being an inner family is a natural one since maps between Segal types are equivalent to inner families (\Cref{mapsbetweensegal}). Conditions (2)-(3) above simply express the fact that exponentiation happens over the point, over $ B $ or over a type dependent on $ B $. An explanation behind (5) can be found in \Cref{profunctors} below.

\begin{proof}
  (1)$\Leftrightarrow$(2). This is immediate from \cite[Proposition 4.1.5]{buchwein} where it is proved that the total space of a type family $P:B\to \universetyp$ is a Segal type if and only if $P$ is an inner family.\\
  $(4)\Leftrightarrow (3) \Leftrightarrow (2)$ is a classical result which can be found for example in \cite{niefield}. \\
	(1)$\Rightarrow$(5). Consider the projections $ p_2: F_2 \to \horn $, $ p_1:F_1 \to \simplextt $. Denote by $ q: F_1 \to \sum_{b:B}E(b) $ the map $ \lambda (t, e).(\alpha(t),e) $. Let $ f:(\sum_{b:B}E(b)) \to B $ the projection, thus $ \alpha\circ p_1 = f\circ q $.
	
	We prove that $ f\circ q: F_1 \to B $ is the Segal type completion relative to $ B $ of $ f\circ q \circ \iota: F_2 \to B $. We will constantly make use of the commutative diagram
\[\begin{tikzcd}
	{F_2} & {F_1} & E \\
	{\Lambda_1^2} & {\Delta^2} & B.
	\arrow["\iota", from=1-1, to=1-2]
	\arrow["{p_2}"', from=1-1, to=2-1]
	\arrow["q", from=1-2, to=1-3]
	\arrow["{p_1}"', two heads, from=1-2, to=2-2]
	\arrow["f", two heads, from=1-3, to=2-3]
	\arrow["i"', from=2-1, to=2-2]
	\arrow["\alpha"', from=2-2, to=2-3]
      \end{tikzcd}\]
    In other words, we use that $ \alpha \circ p_1=f\circ q $ and $ \alpha \circ i \circ p_2 = f\circ q \circ \iota $.
    
	If there is a map $ k:X \to B $ with $ X $ a Segal type and $ \psi: F_2 \to X $ over $ B $, then we can assume we have the canonical projection $\pi:\left(\sum_{b:B}P(b)\right)\to B$ where $ P:B\to \universetyp $ is an inner family given by $\lambda b.\textsf{fib}_k(b)$ \ie by taking the fiber of $k$ over each $b:B$. This gives us the map $ \psi : F_2 \to \sum_{b:B}P(b)$ such that $ \alpha \circ i \circ p_2 =\pi \circ \psi $. Our first goal is to construct
	\[\nu :\prod_{t:\horn}\left(E(\alpha(i(t)))\to P(\alpha(i(t)))\right).\]
	We can assume that $\psi(t,e)\equiv (b_t,e_t):\sum_{b:B}P(b)$. For each $(t,e):F_2$ there is a path $ p_t: \alpha(i(t))=b_t $ given by $ \alpha \circ i \circ p_2 =\pi \circ \psi $. Then we can consider the transport map \[p_t^*:P(\alpha (i(t)))\to P(b_t)\] together with its inverse \[(p_t^{-1})^*: P(b_t) \to P(\alpha (i(t))).\]
	Define $\nu:\equiv \lambda t.\lambda e. (p_t^{-1})^*(e_t)$. This gives us the lifting problem:
	\[\begin{tikzcd}
		\horn & {\sum_{b:B}(E(b)\to P(b))} \\
		\simplextt & B.
		\arrow["i"', from=1-1, to=2-1]
		\arrow["{\pi'}", from=1-2, to=2-2]
		\arrow["\alpha"', from=2-1, to=2-2]
		\arrow["\nu", from=1-1, to=1-2]
	\end{tikzcd}\]
	By assumption, there exists a unique \[\xi:\left<\prod_{t:\simplextt}(E(\alpha(t))\to P(\alpha (t)))\middle|_{\nu}^{\horn} \right>.\]
	The construction of
	\[\varphi:\left(\sum_{t:\simplextt}E(\alpha(t))\right)\to \left(\sum_{b:B}P(b) \right)\]
	is simply given by the formula $\lambda (t,e).(\alpha(t),\xi_t(e))$. It remains to show $\phi\circ \iota\sim \psi$. Consider $(t,e):\sum_{t:\simplextt}E(\alpha(t))$. Then by definition we have 
	\[\phi\circ \iota(t,e)\equiv (\alpha(i(t)),\xi_{i(t)}(e))\text{ and }\psi(t,e)\equiv (b_t,e_t).\] 
	Using the characterization of paths in the total space we provide $p_t:\alpha(i(t))=b_t$, and also there is an equality
	
	\[p_t^*(\xi_{i(t)}(e))\equiv p_t^*(\nu_t(e))\equiv p_t^*((p_t^{-1})^*(e_t))=e_t.\]
	Therefore, $\phi\circ \iota(t,e)=\psi(t,e),$ which gives us the required homotopy.
	To prove uniqueness, let
	\[\varphi':\left(\sum_{t:\simplextt}E(\alpha (t))\right)\to \left(\sum_{b:B}P(b)\right)\]
	over $B$ and a homotopy $\bar r : \varphi' \circ \iota \sim \psi $. We can assume that $\varphi'(t,e)\equiv (b_t',e_t')$. There is a homotopy $q:\alpha\circ p_1 \sim g\circ \varphi'$. For any $(t,e):\sum_{t:\simplextt}E(\alpha (t))$ we get a path $q_t:\alpha(t)=b_t'$, which gives rise to the transport map $q_t^*:P(\alpha(t))\to P(b_t')$.\\
	Similarly, if $(t,e):\sum_{t:\horn}E(\alpha (i(t)))$ then
	\[\bar{r}_t:\varphi'(\iota(t,e))\equiv (b_{i(t)}',e_t')=(b_t,e_t) \equiv \psi(t,e).\]
	This is a path in a total space, so it is given by $r_t:b_{i(t)}'=b_t$ and a path $d_t:r_t^*(e_t')=e_t$ where $r_t^*:P(b_{i(t)}')\to P(b_t)$ is the transport map.

	We first make the following observation:	
	\begin{equation}\label{transportequality}
		\prod_{(t,e):F_2}\prod_{p_t:\alpha
			(i(t))=b_t}\prod_{q_{i(t)}:\alpha(i(t))=b_{i(t)}'}\prod_{r_t:b_{i(t)}'=b_t} (q_{i(t)}^{-1})^*((r_t^{-1})^*(e_t))=(p_t^{-1})^*(e_t).   
	\end{equation}
	By induction on the paths we assume that $p_t\equiv \refl :\alpha(i(t))=\alpha(i(t))$, $q_{i(t)}\equiv\refl :\alpha(i(t))=\alpha(i(t))$ and $r_t\equiv \refl:\alpha(i(t))=\alpha(i(t))$. In this case the transport maps are identities so we can use $\refl:e_t=e_t$.
	Using the map $\varphi'$ we construct
	\[
	\xi':\left<\prod_{t:\simplextt}(E(\alpha(t))\to P(\alpha (t)))\middle|_{\nu}^{\horn} \right>
	\]
	by $\xi'\equiv \lambda t.\lambda e. (q_t^{-1})^*(e_t')$. To see this indeed gives the correct type to $\xi'$ we evaluate on $t:\horn$. Our aim is to construct for each $e:E(\alpha (i(t)))$ a path $\xi_{i(t)}'(e)=\nu_t(e)$. This is 
	\[(q_{i(t)}^{-1})^*(e_{i(t)}')=(p_t^{-1})^*(e_t).\]
	Note that this follows from (\ref{transportequality}) in combination with the path $d_t:r_t^*(e_t')=e_t$.
	The uniqueness of $\xi$ gives us the equality $\xi'=\xi$.
	
	We now show that for all $(t,e):\sum_{t:\simplextt}E(\alpha(t))$, $\varphi(t,e)=\varphi'(t,e)$, so we need $\varphi(t,e)\equiv (\alpha(t),\xi_t(e))=(b_t',e_t')\equiv \varphi'(t,e)$. First, there is a path $q_t:\alpha(t)=b_t'$. Observe that
	\[
	q_t^*(\xi_t(e))=q_t^*(\xi_t'(e))=q_t^*((q_t^{-1})^*(e_t'))=e_t'.
	\]	
	The above proves that $F_1$ is a Segal type completion for $F_2$.\\
	
	(5)$\Rightarrow$ (1). Let $P: B \to \universetyp$ an inner type family. Using the equivalence (1)$\Leftrightarrow$(2), it is enough to show that the type family \[P:\equiv  \lambda b.E(b)\to P(b):B\to \universetyp\] is an inner family. This amounts to showing that the projection map
	\[\pi:\left(\sum_{b:B}(E(b)\to P(b))\right)\to B\]
	is right orthogonal to the horn inclusion $i:\horn \to \simplextt$. Consider a lifting problem
	\[\begin{tikzcd}
		\horn & {\sum_{b:B}\big(E(b)\to P(b)\big)} \\
		\simplextt & B.
		\arrow["i"', from=1-1, to=2-1]
		\arrow["\pi", from=1-2, to=2-2]
		\arrow["\alpha"', from=2-1, to=2-2]
		\arrow[from=1-1, to=1-2]
	\end{tikzcd}\]
	This means we have a partial section $\delta:\prod_{t:\horn}\big(E(\alpha(i(t)))\to P(\alpha(i(t)))\big)$. We define the function
	\[\psi:\left(\sum_{t:\horn}E(\alpha(i(t)))\right)\to \left(\sum_{b:B}P(b)\right)\]
	 as $\psi :\equiv \lambda t.\lambda e.(\alpha(i(t)),\delta_t(e))$. We illustrate this in a commutative diagram
	\[\begin{tikzcd}
		& {\sum_{b:B}P(b)} \\
		{F_2} & {F_1} & {\sum_{b:B}E(b)} \\
		{\Lambda_1^2} & {\Delta^2} & B.
		\arrow["f", two heads, from=2-3, to=3-3]
		\arrow["\alpha"', from=3-2, to=3-3]
		\arrow["{p_1}"', two heads, from=2-2, to=3-2]
		\arrow["{p_2}"', from=2-1, to=3-1]
		\arrow["i"', from=3-1, to=3-2]
		\arrow["\iota"', from=2-1, to=2-2]
		\arrow["\psi", from=2-1, to=1-2]
		\arrow["g"{pos=0.3}, from=1-2, to=3-3]
		\arrow[from=2-2, to=2-3, crossing over]
	\end{tikzcd}\]
	Since $P$ is an inner fammily, the universal property of $F_1$ implies we can complete this diagram to a unique map $\varphi:F_1\to \sum_{b:B}P(b)$ over $B$. In what follows we can assume that $\varphi(t,e)\equiv (b_t,e_t)$. We have paths \[p_t:\alpha(t)=b_t \text{ and } \bar{q}_t:\psi(t,e)\equiv (\alpha(i(t)),\delta_t(e))=(b_{i(t)},e_{i(t)})\equiv \varphi(\iota(t,e)).\]
	The second path amounts to 
	\[
	q_t:\alpha(i(t))=b_{i(t)} \text{ and } d_t:q_t^*(\delta_t(e))=e_{i(t)}
	\]
	where again $q_t^*:P(\alpha(i(t)))\to P(b_{i(t)})$ is the transport map. The element
	\[\xi:\left<\prod_{t:\simplextt}(E(\alpha(t))\to P(\alpha (t)))\middle|_{\delta}^{\horn} \right>\]
	is given by the formula $\xi_t(e):\equiv (p_t^{-1})^*(e_t)$. We have a function in
	
	\begin{equation} \label{transportequality2}
		\prod_{(t,e):F_2}\prod_{p_{i(t)}:\alpha(i(t))=b_{i(t)}}\prod_{q_t:\alpha(i(t))=b_{i(t)}}p_{i(t)}^*=q_t^*.
	\end{equation}
	Indeed, by path induction we can assume that $p_{i(t)}\equiv \refl :\alpha(i(t))=\alpha(i(t))$ and $q_t\equiv \refl :\alpha(i(t))=\alpha(i(t))$. Moreover, in this case the transport maps are identities, therefore the claimed equality holds.
	From (\ref{transportequality2}) we get for all $(t,e):F_2$,
	\[
	\xi_{i(t)}(e)\equiv (p_{i(t)}^{-1})^*(e_{i(t)})=(q_{t}^{-1})^*(e_{i(t)})=\delta_t(e).
	\]	
	Assume an element
	\[\xi':\left<\prod_{t:\simplextt}(E(\alpha(t))\to P(\alpha (t)))\middle|_{\delta}^{\horn} \right>.\]	
	Next, the function
	\[
	\varphi':\left(\sum_{t:\simplextt}E(\alpha(t))\right)\to \left(\sum_{b:B}P(b)\right)
	\]
	is defined as $\lambda(t,e).(\alpha(t),\xi_t'(e)).$ We observe that if $(t,e):\sum_{t:\horn}E(\alpha(i(t)))$ then	
	\[
	\varphi'(\iota(t,e))\equiv \varphi'(i(t),e)\equiv (\alpha(i(t)),\xi_{i(t)}'(e))\equiv (\alpha(i(t)),\delta_t(e))\equiv \psi(t,e).
	\]
	where the middle definitional equality holds since $\xi_{i(t)}'(e)\equiv\delta_t(e)$. By uniqueness we have $\varphi=\varphi'$. This means that 
	\[\varphi(t,e)\equiv (b_t,e_t)=(\alpha(t),\xi_t'(e))\equiv \varphi'(t,e)\]
	This equality implies that for $p_t^{-1}:b_t=\alpha(t)$ we also get an equality $(p_t^{-1})^*(e_t)=\xi_t'(e)$. Therefore, $\xi_t(e)=\xi_t'(e)$, proving uniqueness of the extension $\xi$.
	This shows that the type $\sum_{b:B}(E(b)\to P(b))$ is a Segal type.
\end{proof}

\begin{definition} \label{segalexponential}
  An inner family $E:B \to \universetyp $ over a Segal type $B$ is said to be \textbf{Segal exponentiable} if it satisfies any of the equivalent conditions of \Cref{conduche}. Moreover, a function $f: E \to B$ between Segal types is \textbf{Segal exponentiable} if the family $\lambda b.\textsf{fib}_f(b): B \to \universetyp$ is Segal exponentiable, where $\textsf{fib}_f(b)$ denotes the fiber of $f$ over $b:B$.
\end{definition}

\begin{remark} \label{segalexponential-remark}
	Observe that in the semantics, Segal spaces correspond to $( \infty,1) $-precategories. Therefore, the above \Cref{conduche} refers to exponentiability of functors between $ (\infty,1) $-precategories, hence the name we suggest. Nima Rasekh pointed out to the author that, as the theorem shows, completeness does not play any role in exponentiability. This characteristic is new and does not appear in quasicategories. Certainly this aspect is not apparent in categories neither. We move swiftly to specialize our theorem to Rezk types.
\end{remark}

From \cite{buchwein} recall that a type family $ P:B\to \universetyp $ over a Segal type $ B $ is called \textbf{isoinner family} if the following proposition is true
\[
\isisoinner(P):\equiv \isinner(P)\times \prod_{b:B} \isrezk(P(b)).
\]

For our interests we can assume always the types involved are Rezk types. From \cite[Proposition 10.9]{riehlshulman} that if $ B $ is a Rezk type and $ X $ is any type or shape then $ B^X $ is also a Rezk type.

\begin{definition}
	Let $ A \to B$ a type over $ B $. We define
	\[
	\textsf{\textup{RCompletion}}_{/B}(A):\equiv  \sum_{R:\universetyp/B}\sum_{ \iota:\relfuntyp(A,R) }\textsf{\textup{isRCompletion}}_{/B}^A(R,\iota).
	\]
\end{definition}

\begin{corollary} \label{conduchecorollary}
	Let $E:B \to \universetyp$ an isoinner family over a Rezk type $ B $, the following are equivalent:
	\begin{enumerate}
		\item For any isoinner type family $P:B \to \universetyp$,
		\[\isrezk\left(\sum_{b:B}\left(E(b)\to P(b)\right)\right).\]
		
		\item For any isoinner family $P:B \to \universetyp $, the type family $Q :\equiv  \lambda b. E(b)\to P(b):B\to \universetyp$ is isoinner.
		
		\item For any isoinner family $P:E\to \universetyp $, the type family $Q :\equiv  \lambda b. \prod_{e:E(b)}P(e) :B\to \universetyp$ is isoinner.
		
		\item For any Rezk type $X$, the type family $Q :\equiv  \lambda b.E(b)\to X:B\to \universetyp$ is isoinner.
		
		\item \label{conduchecorollary5} For any map $ \alpha:\simplextt \to B $, together with the inclusion $ i:\horn \to \simplextt $. Let $ F_1:\equiv  \sum_{t:\simplextt} E(\alpha(t)) $ and $ F_2:\equiv  \sum_{t:\horn} E(\alpha(i(t))) $ then
		\[
		\textsf{\textup{isRCompletion}}_{/B}^{F_2}(F_1,\iota),
		\]
		where $ \iota :\equiv  \lambda(t,e).(i(t),e) \ : F_2 \to F_1 $.
	\end{enumerate}
\end{corollary}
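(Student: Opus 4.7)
The plan is to bootstrap \Cref{conduchecorollary} directly from \Cref{conduche}: since an isoinner family is an inner family whose fibers are additionally Rezk, and since a Rezk type corresponds to an isoinner family in exactly the way a Segal type corresponds to an inner family, the five conditions here are the Rezk upgrades of those in \Cref{conduche}, and the only thing to add at each step is fiberwise completeness.

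Concretely, I would first apply \Cref{conduche} to the underlying inner family $E$ to obtain the equivalences (1)--(5) at the Segal/inner level. For (1)$\Leftrightarrow$(2) I invoke the Rezk analog of \Cref{mapsbetweensegal}: a total type $\sum_{b:B} Q(b)$ over a Rezk type $B$ is Rezk iff $Q$ is isoinner. The only additional content beyond \Cref{conduche} is the fiberwise statement that $E(b) \to P(b)$ is Rezk whenever $P(b)$ is Rezk, and this is \cite[Proposition 10.9]{riehlshulman}. The same closure property handles the fiberwise upgrades in (3), where the fiber is $\prod_{e:E(b)} P(e)$, and in (4), where the fiber is $E(b) \to X$; in each case the inner equivalence comes from \Cref{conduche} and only the Rezk condition on the fibers must be checked separately.

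For (1)$\Leftrightarrow$(5) I would rerun the construction in the proof of \Cref{conduche}. The maps built there using path transport along a chosen $2$-simplex $\alpha:\simplextt \to B$ still produce the required $\varphi: F_1 \to \sum_{b:B} P(b)$, and since the target types are now demanded to be Rezk rather than merely Segal, the same $F_1$ serves as a relative Rezk completion of $F_2$ over $B$. Conversely, from a relative Rezk completion we recover the lifting property against the horn inclusion $\horn \hookrightarrow \simplextt$ exactly as in the Segal case, and the Rezk analog of \Cref{mapsbetweensegal} upgrades the resulting inner family to isoinner, yielding (1).

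The main obstacle I expect is purely bookkeeping: I must verify at each juncture that the fibers of every family we construct are actually Rezk, using only the isoinner hypotheses on $E$ and $P$ (respectively, the Rezk hypothesis on $X$ in (4)). All such verifications ultimately reduce to the closure of Rezk types under (dependent) function types with arbitrary domain, i.e. \cite[Proposition 10.9]{riehlshulman}, so no new type-theoretic input beyond what appears in the proof of \Cref{conduche} is needed.
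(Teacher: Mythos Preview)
Your proposal is correct and follows essentially the same route as the paper: bootstrap from \Cref{conduche}, and at each step add only the fiberwise Rezk verification. The paper's proof is terser but makes the same moves, citing \cite[Proposition 4.2.6]{buchwein} for the Rezk analog of \Cref{mapsbetweensegal} and handling the fiberwise upgrades just as you describe (for (1)$\Rightarrow$(2) the paper obtains Rezkness of the fibers $E(b)\to P(b)$ by pulling back from the Rezk total space rather than by \cite[Proposition 10.9]{riehlshulman}, but either works).

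One small inaccuracy worth fixing: your closing claim that \emph{all} the Rezk bookkeeping reduces to \cite[Proposition 10.9]{riehlshulman} is not quite right for (1)$\Rightarrow$(5). There you must also verify that $F_1 \equiv \sum_{t:\simplextt} E(\alpha(t))$ is itself a Rezk type, and this is not a function type, so closure under exponentials does not apply. The paper handles this point separately: $\sum_{b:B} E(b)$ is Rezk by \cite[Proposition 4.2.6]{buchwein} (total space of an isoinner family over a Rezk base), and $F_1$ is its pullback along $\alpha$, so one appeals to pullback stability of Rezk types. You should add this step explicitly.
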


\begin{proof}
  (2)$ \Rightarrow $(1). Follows from \cite[Proposition 4.2.6]{buchwein} which proves that the total space of an inner family over a Rezk type is a Rezk type. \\
  (1)$ \Rightarrow $(2). \Cref{conduche} shows that $ \isinner(Q) $. For each $ b:B $ we have $ \isrezk(E(b)\to P(b)) $ since such fiber can be obtained as a pullback from the Rezk type $ \sum_{b:B}\left(E(b)\to P(b)\right) $.\\
  (2) $\Leftrightarrow$ (3) $\Leftrightarrow$ (4) is a classical result. \\
	(5)$ \Rightarrow $(2). From \Cref{conduche} the family is inner. Since for each $ b $, $ \isrezk(P(b)) $ then each fiber $ E(b)\to P(b) $ is also a Rezk type. \\
	(1)$ \Rightarrow $(5). The proof that $ F_1 $ is the completion is the same as in \Cref{conduche}. We just need to show that it is Rezk. This again follows from \cite[Proposition 4.2.6]{buchwein} and pullback stability of Rezk types.
\end{proof}

\begin{definition}
  An isoinner family $E:B \to \universetyp $ over a Rezk type $B$ is said to be \textbf{exponentiable} if it satisfies any of the equivalent conditions of \Cref{conduchecorollary}. Moreover, a function $f: E \to B$ between Rezk types is an \textbf{exponentiable functor} if the family $\lambda b.\textsf{fib}_f(b): B \to \universetyp$ is exponentiable, where $\textsf{fib}_f(b)$ denotes the fiber of $f$ over $b:B$.
\end{definition}

The terminology ``functor'' in the definition above is justified by the functorial behaviour of functions between Segal types, see \cite[Proposition 6.1]{riehlshulman}. We have reserve the name exponentiable functor till this point in view of \Cref{segalexponential-remark}. From \Cref{conduche} and \Cref{conduchecorollary} it would seems that we have two notions of exponential functors, one for Segal types and other for Rezk types. However, both coincide when we restrict to Rezk types.

\begin{corollary}
 Let $ E: B \to \universetyp $ an isoinner type family over a Rezk type $ B $. Then $ E $ is Segal exponentiable if and only if $ E $ is exponentiable.
\end{corollary}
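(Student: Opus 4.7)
The two directions are highly asymmetric, so I would handle them separately.

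For the forward direction (Segal exponentiable implies exponentiable), my strategy is to transport condition (2) of \Cref{conduche} to condition (2) of \Cref{conduchecorollary}. Given an isoinner family $P: B \to \universetyp$, the family is in particular inner, so by Segal exponentiability of $E$ the family $Q :\equiv \lambda b. E(b) \to P(b)$ is inner. It then remains to verify that each fiber $E(b) \to P(b)$ is a Rezk type. But $P(b)$ is Rezk by the isoinner hypothesis on $P$, and \cite[Proposition 10.9]{riehlshulman} guarantees that function types from any type or shape into a Rezk type are again Rezk; hence $E(b) \to P(b)$ is Rezk, and $Q$ is isoinner, which is condition (2) of \Cref{conduchecorollary}.

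For the backward direction (exponentiable implies Segal exponentiable), I would pass through condition (5) of the two theorems, since these differ only by replacing the relative Segal type completion with the relative Rezk type completion. Exponentiability furnishes that $F_1 :\equiv \sum_{t:\simplextt} E(\alpha(t))$ is the relative Rezk type completion of $F_2 :\equiv \sum_{t:\horn} E(\alpha(i(t)))$ over $B$; in particular $F_1$ is Rezk, and therefore Segal. What remains is to show that $\iota^*_{/B}: \relfuntyp(F_1, X) \to \relfuntyp(F_2, X)$ is an equivalence for every Segal type $X$ over $B$, not merely every Rezk type. Given such an $X$, the plan is to form its relative Rezk type completion $\hat{X}$ over $B$, apply exp~(5) to $\hat{X}$, and then perform a diagram chase using the unit $X \to \hat{X}$ together with \Cref{relunivsegaltype} and the uniqueness of factorizations through relative Rezk completions to extract an equivalence for $X$ itself.

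The main obstacle is precisely this last step: transferring a universal property phrased against Rezk targets to one phrased against arbitrary Segal targets. A more direct alternative would be to verify conditions (1)--(4) of \Cref{conduche} by hand, exploiting the fact that under the blanket hypothesis $E$ isoinner over a Rezk type $B$ the families $\lambda b. E(b) \to P(b)$ appearing in the Segal conditions automatically have Rezk fibers whenever $P(b)$ is Rezk; the catch is that condition (2) of \Cref{conduche} takes as input an inner family $P$ which need not be isoinner, so one cannot reduce tautologically and must either pass through the Rezk completion of $P$ fiberwise or use the completion-based argument sketched above. Either route reflects semantically the fact that the Rezk model structure on bisimplicial sets is a left Bousfield localization of the Segal model structure.
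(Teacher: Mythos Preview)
Your forward direction (Segal exponentiable $\Rightarrow$ exponentiable) is correct and is exactly what the paper does: given an isoinner family $P$, it is in particular inner, so Segal exponentiability makes $Q :\equiv \lambda b.\,(E(b)\to P(b))$ inner; then \cite[Proposition~10.9]{riehlshulman} gives each fiber $E(b)\to P(b)$ Rezk because $P(b)$ is Rezk, so $Q$ is isoinner. This is precisely the content of the paper's two sentences taken together --- the paper's word ``Conversely'' is really introducing the second half of this same forward argument, not a new direction.

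For the backward direction (exponentiable $\Rightarrow$ Segal exponentiable) you have correctly identified a genuine obstacle and not removed it. Condition~(2) of \Cref{conduche} must be checked for an arbitrary \emph{inner} family $P$, but the hypothesis from \Cref{conduchecorollary} speaks only of isoinner families; likewise condition~(5) of \Cref{conduche} demands a universal property against all \emph{Segal} targets $X$, whereas \Cref{conduchecorollary} supplies it only against Rezk targets. Your proposal to pass to the relative Rezk completion $\hat X$ of a Segal target $X$ does not work: knowing that $\relfuntyp(F_1,\hat X)\to\relfuntyp(F_2,\hat X)$ is an equivalence says nothing about $\relfuntyp(F_1,X)\to\relfuntyp(F_2,X)$, since mapping \emph{out of} the Rezk type $F_1$ into $X$ versus into $\hat X$ are genuinely different (the Rezk-completion universal property controls maps \emph{into} Rezk types, not out of them). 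The fiberwise-Rezk-completion idea for $P$ runs into the same wall. So your backward argument is a sketch with an acknowledged hole rather than a proof.

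It is worth noting that the paper's own proof, read literally, does not close this gap either: its only remaining sentence appeals to ``each $P(b)$ is Rezk'', which presupposes $P$ isoinner and therefore cannot serve as an argument when $P$ is merely inner. So the difficulty you flag is real; you have simply been more explicit about it than the paper.
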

\begin{proof}
	We observe that Condition 2, respectively, in each result are equivalent. The forward direction is obvious, since by definition any isoinner family is in particular an inner family. Conversely, we just need to show that any fiber $ E(b) \to P(b) $ is Rezk. But this follows form \cite[Proposition 10.9]{riehlshulman} since each $ P(b) $ is Rezk.
\end{proof}


\section{The bisimplicial sets semantics of sHoTT} \label{semantics}

In this section we check that our synthetic definitions of Segal type completion and exponentiable functors are semantically correct. In \Cref{semanticscomparisonus} verify that the Segal type completion is consistent with the usual semantics. We finalize with exponentiable functors in \Cref{exponentialsemantics}.
We use the fact the semantics of sHoTT is the category of bisimplicial sets $ \bisim. $ The details of the sematics are found in \cite{riehlshulman}. We also recommend \cite[Section 6]{ces22} for a discussion.

\subsection{Segal type completion and Segal space completion} \label{semanticscomparisonus}

Here we check the soundness of a Segal type completion by comparing it with the Segal space completion defined in bisimplicial sets. Furthermore, since in general we want to consider dependent types, we need a relative version. Given a Segal space $ B $ we consider the induced model structure on the slice $ \bisim/B $. The existence of such a Segal completion is given by the fibrant replacement in the Segal model structure on $ \bisim $, and in the relative case the fibrant replacement in the slice $ \bisim/B $ (see \Cref{completionfibrant}).

Recall that for objects $ \pi:A\to B $ and $ \xi:S \to B $ in $ \bisim/B  $, the relative mapping space is denoted as $ Map_{/B}(A,S) $. This space is obtained by the pullback square

\[\begin{tikzcd}
	{Map_{/B}(A,S)} & {Map(A,S)} \\
	\Delta[0] & {Map(A,B).}
	\arrow["\pi"', from=2-1, to=2-2]
	\arrow["{\xi^A}", from=1-2, to=2-2]
	\arrow[from=1-1, to=2-1]
	\arrow[from=1-1, to=1-2]
	\arrow["\lrcorner"{description, pos=0}, draw=none, from=1-1, to=2-2]
\end{tikzcd}\]
On the other hand, the Segal space of functions between $ \pi $ and $ \xi $ is given by the following pullback square
\[\begin{tikzcd}
	{Fun_{/B}(A,S)} & {S^A} \\
	F(0) & {B^A.}
	\arrow["\pi"', from=2-1, to=2-2]
	\arrow[from=1-1, to=2-1]
	\arrow[from=1-1, to=1-2]
	\arrow["\xi^A", from=1-2, to=2-2]
	\arrow["\lrcorner"{description, pos=0}, draw=none, from=1-1, to=2-2]
\end{tikzcd}\]
Observe that $ Fun_{/B}(A,S)_0=Map_{/B}(A,S) $.

\begin{definition}
	Let $ \pi:A\to B $ and $\xi: S \to B $ in $ \bisim/B  $. Assume further that $ S $ is a Segal space and there is a map $ \iota: A\to S\in \bisim/B.$ We say that $ S $ is a \textbf{Segal space completion relative} to $ B $ for $ A $ if for any Segal space over $ B $, $ \delta:X \to B $, the induced map
	\[
	Map_{/B}(S,X)\to Map_{/B}(A,X)
	\]
	is an equivalence of spaces.
\end{definition}

This definition is the generalization of the completion of a Segal space into a complete one as defined by Rezk in \cite{rezk}. A related notion of completion of a precategory into a category in the context of Homotopy Type Theory due to Ahrens, Kapulkin and Shulman appears in \cite{ahrens2015univalent}, where the authors use the suggestive name ``Rezk completion.''

\begin{observation} \label{tuniversaltouniversal}
	Note that the interpretation of the map (\ref{isuniversal_rel}) from \Cref{segaltypecompletion} into bisimplicial sets gives us an equivalence between Segal space
	\[Fun_{/B}(S,X)\to Fun_{/B}(A,X), \]
	which is just to say that we have a level-wise equivalence of spaces. Thus, for any function $ \pi:A\to B $, with $ B $ a Segal type, the relative Segal type completion $\xi:S\to B$ for the type $ A $ gives us a Segal space completion relative to $ B $ for the Reedy fibrant bisimplicial set $ A $.
\end{observation}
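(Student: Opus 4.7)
The plan is to trace how the type-theoretic constructions appearing in \Cref{segaltypecompletion} are interpreted in the standard bisimplicial semantics of sHoTT and then read off the claim directly. First I would identify the semantic counterpart of $\relfuntyp(A,X)$. Since $\relfuntyp(A,X)$ is defined as the pullback of $X^A \to B^A$ along the point $\pi : \mathbf{1} \to B^A$, and since exponentials in the type-theoretic universe interpret as internal homs of bisimplicial sets, the interpretation of $\relfuntyp(A,X)$ is precisely the bisimplicial set $Fun_{/B}(A,X)$ defined immediately before the statement. In particular, the interpretation of the map $\iota_{/B}^{*} : \relfuntyp(S,X) \to \relfuntyp(A,X)$ is the map $Fun_{/B}(S,X) \to Fun_{/B}(A,X)$ obtained by precomposition with $\iota : A \to S$.

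Next I would use that the hypothesis $\isequiv(\iota_{/B}^{*})$ from \Cref{segaltypecompletion} interprets as saying that the map $Fun_{/B}(S,X) \to Fun_{/B}(A,X)$ is an equivalence in $\bisim$. With the Reedy model structure (so that $\iota$ is between Reedy-fibrant objects and $X \to B$ is a fibration between Segal spaces), an equivalence of bisimplicial sets is exactly a level-wise equivalence of the simplicial spaces of simplices. This delivers the first sentence of the observation directly, and it also justifies the phrasing \emph{level-wise equivalence of spaces}.

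Restricting this equivalence to simplicial degree $0$ then gives the desired comparison. Using the identity $Fun_{/B}(A,X)_0 = Map_{/B}(A,X)$ recorded right before the observation (and likewise for $S$), the degree-$0$ component of the map $Fun_{/B}(S,X) \to Fun_{/B}(A,X)$ is precisely the precomposition map $Map_{/B}(S,X) \to Map_{/B}(A,X)$. Since level-wise equivalence in particular yields an equivalence at level $0$, this map is a weak equivalence of simplicial sets for every Segal space $\delta : X \to B$ over $B$. Combined with the interpretation of $\issegal(S)$ as $S$ being a Segal space, we conclude that $\xi : S \to B$ meets the definition of a Segal space completion relative to $B$ for the Reedy-fibrant bisimplicial set $A$.

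The only subtlety, and the step I would treat most carefully, is ensuring that the interpretation of $\relfuntyp$ is literal on the nose: the pullback defining $\relfuntyp(A,X)$ in sHoTT must be interpreted as the strict pullback in $\bisim$ defining $Fun_{/B}(A,X)$, and Reedy fibrancy of $\xi^{A}$ must be available so that this strict pullback is homotopically meaningful. Given Shulman's strictification results and the fact that $B$ is assumed a Segal space (hence Reedy fibrant), these verifications are routine, so no further technology is required.
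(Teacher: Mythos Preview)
Your proposal is correct and follows essentially the same line as the paper. The observation in the paper is not given a separate proof; its justification is embedded in the statement itself, namely that the interpretation of $\iota_{/B}^{*}$ is an equivalence of Segal spaces (hence level-wise), and taking level $0$ yields the $Map_{/B}$-equivalence required by the semantic definition---exactly the steps you spell out, with your added care about strictification being a reasonable elaboration.
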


Furthermore, these two notions coincide. Firstly, we introduce notation from  \cite[Proposition 1.2.22]{riehlverity}. The slice $ \bisim/B $ is cotensored over simplicial sets \ie we have $ Map_{/B}(F(n)\times S, X) \simeq Map_{/B}(S, F(n)\pitchfork_B X) $ (here we think of $F(n)$ as a space). Note that since $ X $ is a Segal space over $ B $ so it is $ F(n)\pitchfork_B X $ as is constructed via the pullback
\[\begin{tikzcd}
	{F(n)\pitchfork_B X} & {X^{F(n)}} \\
	B & {B^{F(n)}.}
	\arrow["cst"', from=2-1, to=2-2]
	\arrow[from=1-1, to=2-1]
	\arrow["{\delta^{F(n)}}", from=1-2, to=2-2]
	\arrow[from=1-1, to=1-2]
	\arrow["\lrcorner"{description, pos=0}, draw=none, from=1-1, to=2-2]
\end{tikzcd}\]

\begin{lemma}\label{universaltotuniversal}
	Let $ \pi:A\to B $ and $\xi: S \to B $ in $ \bisim/B  $, where $ B $ is a Segal space. Assume that there is a map $ \iota: A\to S\in \bisim/B$ showing $ S $ as the Segal space completion relative to $ B $ for $ A $. Then for any Segal space $ X $ together with $ \delta:X\to B $ the induced map \[Fun_{/B}(S,X)\to Fun_{/B}(A,X) \] is an equivalence of Segal spaces.
\end{lemma}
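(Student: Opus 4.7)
The plan is to verify the claim level-wise, using the cotensor structure on $\bisim/B$ over simplicial sets recalled just before the statement to reduce everything to the hypothesis on $\iota:A \to S$. A Reedy (i.e.\ level-wise) weak equivalence between Segal spaces is an equivalence of Segal spaces, so it suffices to show that $Fun_{/B}(S,X)_n \to Fun_{/B}(A,X)_n$ is a weak equivalence of simplicial sets for every $n \geq 0$.

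First I would unpack the defining pullback of $Fun_{/B}$ together with the exponential adjunction in $\bisim$ to obtain, for any object $C \to B$ of $\bisim/B$ and each $n\geq 0$, a natural identification
\[ Fun_{/B}(C,X)_n \;\cong\; Map_{/B}(F(n)\times C,\, X), \]
extending the case $n=0$ already observed. Applying the cotensor adjunction identifies this in turn with $Map_{/B}(C,\, F(n)\pitchfork_B X)$. The displayed pullback defining $F(n)\pitchfork_B X$ exhibits this object as a Segal space over $B$, since $X^{F(n)}$ is a Segal space and Segal spaces are stable under pullback in $\bisim/B$.

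With $F(n)\pitchfork_B X$ playing the role of the test Segal space over $B$, the hypothesis on $S$ applied directly yields that
\[ Map_{/B}(S,\, F(n)\pitchfork_B X) \longrightarrow Map_{/B}(A,\, F(n)\pitchfork_B X) \]
is a weak equivalence of simplicial sets. Translating back through the cotensor identification produces the desired level-wise weak equivalence $Fun_{/B}(S,X)_n \to Fun_{/B}(A,X)_n$, and since both $Fun_{/B}(S,X)$ and $Fun_{/B}(A,X)$ are Segal spaces (as $X$ is), the map is a Reedy weak equivalence between Segal spaces, hence an equivalence of Segal spaces. The main obstacle is the bookkeeping across the two directions of the bisimplicial structure: one must carefully verify that the cotensor adjunction applies inside the slice $\bisim/B$ and that the $n$-th row of $Fun_{/B}(-,X)$ is indeed represented by $F(n)\times(-)$ in the claimed sense; once these routine compatibilities are in place the conclusion is a formal consequence of the hypothesis.
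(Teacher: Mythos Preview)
Your proposal is correct and follows essentially the same approach as the paper: both compute $Fun_{/B}(-,X)_n$ as $Map_{/B}(-,F(n)\pitchfork_B X)$ via the cotensor adjunction, invoke that $F(n)\pitchfork_B X$ is a Segal space over $B$, and apply the completion hypothesis level-wise to conclude. The paper writes out the chain of identifications for $Fun_{/B}(S,X)_n$ explicitly, but the argument is identical to yours.
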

\begin{proof}
	Firstly, for any $ n\geq 0 $ we have the following:
	\begin{align*}
		Fun_{/B}(S,X)_n & = Map(F(n),Fun_{/B}(S,X)) \\
		& = Map(F(n),X^S\times_{B^S} F(0)) \\
		& = Map(F(n),X^S)\times_{Map(F(n),B^S)} F(0) \\
		& = Map(F(n)\times S,X)\times_{Map(F(n)\times S,B)} F(0) \\
		& = Map_{/B}(F(n)\times S, X) \\
		& = Map_{/B}(S, F(n)\pitchfork_B X),
	\end{align*}

	We are rely on the fact that $ F(n)\pitchfork_B X $ is a Segal space (see the previous paragraph). Similarly, we get that $ Fun_{/B}(A,X)_n = Map_{/B}(A, F(n)\pitchfork_B X) $ for all $ n\geq 0$.  
	By assumption, $ S\to B $ is the Segal space completion relative to $ B $ for $ A $. Hence, for $ F(n)\pitchfork_B X $ we have an equivalence of spaces
	\[
	Map_{/B}(S, F(n)\pitchfork_B X) \to Map_{/B}(A, F(n)\pitchfork_B X)
	\]
	for all $ n\geq 0 $. This gives us the equivalence between Segal spaces
	\[Fun_{/B}(S,X)\to Fun_{/B}(A,X). \]
\end{proof}

\begin{corollary}
	Given a Segal type $ B $ and any type $ A $ over $ B $. The notion of a relative Segal type completion for $ A $ is consistent with the semantics.
\end{corollary}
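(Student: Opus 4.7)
The plan is to show both directions of the equivalence between the synthetic and semantic notions by combining the preceding Observation~\ref{tuniversaltouniversal} and Lemma~\ref{universaltotuniversal}. Semantically, a relative Segal type completion in the sense of Definition~\ref{segaltypecompletion} unfolds to the assertion that a certain map of Reedy fibrant bisimplicial sets $Fun_{/B}(S,X) \to Fun_{/B}(A,X)$ is an equivalence for all Segal spaces $X$ over $B$. The semantic notion, on the other hand, asks for the analogous $Map_{/B}$ condition. So the corollary amounts to showing these two conditions are equivalent over a Segal space $B$.

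For the forward implication, suppose $\xi:S \to B$ is a relative Segal type completion for $\pi: A \to B$ in the synthetic sense. Interpreting Definition~\ref{segaltypecompletion} and its equivalence (\ref{isuniversal_rel}) in bisimplicial sets, as noted in Observation~\ref{tuniversaltouniversal}, yields a level-wise equivalence $Fun_{/B}(S,X) \to Fun_{/B}(A,X)$ for every Segal space $X$ over $B$. Evaluating at simplicial level zero, using $Fun_{/B}(-,X)_0 = Map_{/B}(-,X)$, produces the required equivalence $Map_{/B}(S,X) \to Map_{/B}(A,X)$, so $S$ is a Segal space completion relative to $B$ for $A$ in the semantic sense.

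For the reverse implication, assume that $\xi: S \to B$ is a Segal space completion relative to $B$ for $A$ in the semantic sense, so that the $Map_{/B}$ equivalence holds for every Segal space $X$ over $B$. By Lemma~\ref{universaltotuniversal}, which upgrades the $Map_{/B}$ equivalence to a level-wise equivalence of Segal spaces by using the cotensor $F(n) \pitchfork_B X$ (which is again a Segal space over $B$), we obtain an equivalence $Fun_{/B}(S,X) \to Fun_{/B}(A,X)$. Since this is precisely the semantic interpretation of the synthetic map $\iota^*_{/B}$ in (\ref{isuniversal_rel}), it follows that $S$ together with the corresponding map $\iota$ satisfies the synthetic definition of a relative Segal type completion.

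The combination of these two arguments gives the consistency. The only potentially subtle point, which is handled already by Lemma~\ref{universaltotuniversal}, is recognizing that $F(n) \pitchfork_B X$ remains a Segal space over $B$, so that the semantic hypothesis is strong enough to feed the level-wise upgrade; once this is in place the argument is essentially a reformulation, and no further calculations are needed.
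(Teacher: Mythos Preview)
Your proof is correct and follows exactly the paper's approach: the paper's own proof is the one-liner ``This is immediate from \Cref{tuniversaltouniversal} and \Cref{universaltotuniversal},'' and you have simply spelled out what that immediacy means, using the observation for the forward direction and the lemma for the reverse.
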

\begin{proof}
	This is immediate from \Cref{tuniversaltouniversal} and \Cref{universaltotuniversal}.
\end{proof}

Of course, we also have the non-relative version of this soundness result and it is enough to take $ B $ to be the terminal object. To finalize this section we observe that the fibrant replacement in $ \bisim/B $ coincides with Segal completion relative to $ B $. Since the model structure on $ \bisim/B $ is induced by the one on $ \bisim $ it will be enough to verify this fact for $ B=1 $.

\begin{proposition} \label{completionfibrant}
	Let $ A $ be a Reedy fibrant bisimplicial set. If the Segal space completion of $ A $ exists, then it coincides with its fibrant replacement in the Segal space model structure $ \bisim_{SS}$. 
\end{proposition}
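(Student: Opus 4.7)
The plan is to match the definition of the Segal space completion with the standard characterization of weak equivalences in the Segal space model structure, and conclude that the completion is already a fibrant replacement.

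First, I would recall the relevant features of the Segal space model structure $\bisim_{SS}$, which is the left Bousfield localization of the Reedy model structure at the spine inclusions. Two facts will be used: (i) the fibrant objects of $\bisim_{SS}$ are precisely the Reedy fibrant bisimplicial sets that are Segal spaces; (ii) a map $f: X \to Y$ between Reedy fibrant objects is a weak equivalence in $\bisim_{SS}$ if and only if for every Segal space $Z$ the induced map of mapping spaces $Map(Y, Z) \to Map(X, Z)$ is a weak equivalence of simplicial sets. This latter characterization is the standard ``local equivalence'' description for Bousfield localizations.

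Next, suppose $\iota: A \to S$ is a Segal space completion in the sense defined just before the proposition. By assumption $S$ is a Segal space, and $A$ is Reedy fibrant by hypothesis, so both $A$ and $S$ are objects to which fact (ii) applies. The defining universal property of the Segal space completion gives, for every Segal space $Z$, a weak equivalence $Map(S, Z) \to Map(A, Z)$. By (ii) this is exactly the statement that $\iota: A \to S$ is a weak equivalence in $\bisim_{SS}$. Combined with (i), $S$ is fibrant in $\bisim_{SS}$, so $\iota: A \to S$ is a weak equivalence from $A$ to a fibrant object, i.e.\ a fibrant replacement.

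Finally, I would invoke the uniqueness up to weak equivalence of fibrant replacements in any model category: any two fibrant replacements $A \to A'$ and $A \to S$ in $\bisim_{SS}$ are connected by a weak equivalence $A' \simeq S$ between fibrant objects, which is then also a Reedy weak equivalence (since weak equivalences between fibrant objects in a Bousfield localization coincide with weak equivalences in the original model structure). Thus the Segal space completion agrees with the fibrant replacement in $\bisim_{SS}$ up to the expected notion of equivalence.

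The only place requiring care is the application of (ii): one must know that mapping into a Segal space detects localized equivalences between Reedy fibrant objects, which is a standard consequence of the construction of $\bisim_{SS}$ as a localization of the Reedy model structure at the spine inclusions; this is where the Reedy fibrancy hypothesis on $A$ is used.
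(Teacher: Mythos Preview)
Your proof is correct and follows essentially the same route as the paper: both use the characterization of weak equivalences in the Bousfield-localized model structure $\bisim_{SS}$ as ``local maps'' (maps inducing equivalences on mapping spaces into every Segal space), observe that this is exactly the defining property of a Segal space completion, and conclude that the completion is a fibrant replacement. The paper additionally records the converse implication (a fibrant replacement is a Segal completion) directly rather than via your uniqueness-of-fibrant-replacements argument, but this is a cosmetic difference.
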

\begin{proof}
	Recall that a map $ i:A \to S $ is a weak equivalence in $ \bisim_{SS} $ if is a local map, \textit{i.e.,} a map that such that
	\[
	i^*:Map(S,X) \to Map(A,X)
	\]
	is equivalence of spaces for any Segal space $ X $. Then it is clear that if $ S $ is a fibrant replacement it must be a Segal space completion. \\
	Conversely, if $ S $ is the Segal space completion then it induces equivalences like the above, so $ i $ is indeed a weak equivalence. $ S $ is a Segal space by assumption, so it must be a fibrant replacement in $ \bisim_{SS} $.
\end{proof}

\subsection{Exponentiable functors} \label{exponentialsemantics}
Here we verify that our notion of exponentiable functors is semantically correct. Ayala, Francis and Rozenblyum prove in \cite{ayala2018factorization} the result below that characterizes exponentiable functors between $ \infty $-categories. This is our reference point.

\begin{theorem}\label{ayalafrancis}
	The following conditions on a functor $ \pi:\catE \to \catB $ between $ \infty $-categories are equivalent.
	\begin{enumerate}
		\item The functor $ \pi $ is an exponentiable fibration.
		\item \label{afcondition3} For each functor $ [2] \to \catB $, the diagram of pullbacks
		\[\begin{tikzcd}
			{\catE \times_{\catB} \{1\}} & {\catE \times_{\catB} \{1<2\}} \\
			{\catE \times_{\catB} \{0<1\}} & {\catE \times_{\catB} [2]}
			\arrow[from=1-1, to=2-1]
			\arrow[from=2-1, to=2-2]
			\arrow[from=1-2, to=2-2]
			\arrow[from=1-1, to=1-2]
		\end{tikzcd}\]
    is a pushout of $ \infty $-categories.
\end{enumerate} 
\end{theorem}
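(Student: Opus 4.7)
The plan is to translate exponentiability into preservation of colimits under pullback, and then identify condition (2) as the generating class of colimits whose preservation suffices. Throughout, I would work with the $\infty$-category of $\infty$-categories $\infty\text{-Cat}$, and its slices over $\catB$ and $\catE$.

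First I would recall the standard fact that a functor $\pi:\catE \to \catB$ is exponentiable if and only if the pullback functor $\pi^*:\infty\text{-Cat}/\catB \to \infty\text{-Cat}/\catE$ preserves small colimits: both slices are presentable $\infty$-categories, so by the presentable adjoint functor theorem (Lurie, Higher Topos Theory 5.5.2.9), colimit preservation is equivalent to the existence of the right adjoint $\prod_\pi$. The theorem then reduces to showing that preservation of colimits by $\pi^*$ is equivalent to the pushout condition in (2).

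For the forward direction (1)$\Rightarrow$(2), the key ingredient is the basic Segal pushout $\{0,1\}\sqcup_{\{1\}}\{1,2\} \simeq [2]$ in $\infty\text{-Cat}$. Given a functor $[2]\to \catB$, base change along this map produces a pushout square in $\infty\text{-Cat}/\catB$; applying $\pi^*$, which preserves pushouts by hypothesis, yields precisely the square appearing in (2) and exhibits it as a pushout.

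For the backward direction (2)$\Rightarrow$(1), the aim is to upgrade preservation of the Segal-type $[2]$-pushouts to preservation of arbitrary colimits. The strategy rests on two ingredients. First, every object of $\infty\text{-Cat}/\catB$ can be canonically presented as the colimit of its category of simplices $[n]\to \catB$. Second, each simplex $[n]$ with $n\ge 2$ is itself built inductively via iterated Segal pushouts of the form $[0,\ldots,n-1]\sqcup_{\{n-1\}}[n-1,n]\simeq [n]$. Combining these presentations, the generating colimits of $\infty\text{-Cat}/\catB$ reduce, after an induction on dimension, to the class preserved by hypothesis (2), and a cofinality-style argument then promotes preservation of this generating class to preservation of all small colimits, giving exponentiability of $\pi$. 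The main obstacle is precisely this backward step: propagating preservation of the specific Segal pushouts over $\catB$ to preservation of arbitrary colimits in $\infty\text{-Cat}/\catB$. This requires fine control of the simplex presentation of $\infty$-categories and its interaction with base change; I would likely invoke the complete Segal space model to reduce the question to Reedy-type arguments in bisimplicial sets, where the Segal condition is manifestly the only obstruction to colimit preservation.
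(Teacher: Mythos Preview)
The paper does not actually prove this theorem: it is quoted from \cite{ayala2018factorization} as the semantic reference point, and the reader is explicitly sent to that source for the argument. So there is no in-paper proof to compare your proposal against.

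Assessing your proposal on its own terms: the forward direction $(1)\Rightarrow(2)$ is fine and is how anyone would do it. The backward direction is where the content lives, and your sketch does not close the gap you yourself flag. Knowing that every object of the slice over $\catB$ is a colimit of its simplices, and that each $[n]$ is an iterated Segal pushout, does \emph{not} imply that a functor preserving those particular pushouts preserves all colimits: the colimits used in the simplex presentation (geometric realizations of the nerve diagram) are not themselves Segal pushouts, and there is no general ``generation'' principle transferring preservation of a family of colimits to preservation of arbitrary ones. What is actually needed is a localization argument---one shows that pullback along $\pi$ sends spine inclusions over $\catB$ to Segal equivalences, hence descends to a left adjoint on the localized (complete Segal) level---or, equivalently, a direct verification that the Leibniz pullback of the spine inclusion against $\pi$ is inner anodyne. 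Your last sentence gestures at this, but ``the Segal condition is manifestly the only obstruction to colimit preservation'' is precisely the assertion requiring proof, not an input you can invoke.
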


For a full explanation of the theorem we recommend the original reference \cite{ayala2018factorization}. We will focus on Condition \ref{afcondition3} to see this is exactly Condition \ref{conduchecorollary5} of \Cref{conduchecorollary}. This last condition involve objects that are defined by the following pullback square
\[\begin{tikzcd}
	{E_{F(1) \sqcup_{F(0)} F(1)}} & {E_{F(2)}} & E \\
	{F(1) \sqcup_{F(0)} F(1)} & {F(2)} & B.
	\arrow["f", two heads, from=1-3, to=2-3]
	\arrow["\alpha"', from=2-2, to=2-3]
	\arrow[two heads, from=1-2, to=2-2]
	\arrow[from=1-2, to=1-3]
	\arrow[from=1-1, to=2-1]
	\arrow["i"', from=2-1, to=2-2]
	\arrow["\iota", from=1-1, to=1-2]
	\arrow["\lrcorner"{anchor=center, pos=0.125}, draw=none, from=1-1, to=2-2]
	\arrow["\lrcorner"{anchor=center, pos=0.125}, draw=none, from=1-2, to=2-3]
\end{tikzcd}\]
We remark that the arrow on the far letf is not fibration because $F(1) \sqcup_{F(0)} F(1)$ is not a Segal space. Nevertheless, the diagrams expresses the fact that $ E_{F(1) \sqcup_{F(0)} F(1)} $ and $ E_{F(2)} $ are the fibers of $ f $ over $ F(1) \sqcup_{F(0)} F(1) $ and $ F(2)$, respectively.

The map $ E_{F(1) \sqcup_{F(0)} F(1)} \to E_{F(2)} $ shows $ E_{F(2)} $ as the Segal type completion of $ E_{F(1) \sqcup_{F(0)} F(1)} $, thefore when we interpret the square in Condition \ref{conduchecorollary5} from \Cref{conduchecorollary} into $ \bisim_{Segal} $. This gives $ E_{F(2)} $ as the fibrant replacement of $ E_{F(1)}\coprod_{F(0)}E_{F(1)} $ in $ \bisim_{Segal} $. This is just to say that the diagram
\[\begin{tikzcd} \label{pushoutexp}
	{E_{F(0)}} & {E_{F(1)}} \\
	{E_{F(1)}} & {E_{F(2)}}
	\arrow[from=1-1, to=2-1]
	\arrow[from=1-1, to=1-2]
	\arrow[from=1-2, to=2-2]
	\arrow[from=2-1, to=2-2]
\end{tikzcd}\]
is a pushout square of Segal spaces. When $ E $ and $ B $  are Rezk spaces this is exactly Condition \ref{afcondition3} of \Cref{ayalafrancis}. On the other hand, the category of simplicial sets can be embedded into bisimplicial sets via $ p_1^*:\sset \to \bisim $ as defined in \cite{joyaltierney}. Furthermore, this is shown to provide a Quillen adjunction between the Joyal model structure on $ \sset  $ and the complete Segal space model on $ \bisim $. This inclusion preserves exponentials.

\subsubsection{On profunctors and correspondences} \label{profunctors}

Due to the limitations of sHoTT we can not yet incorporate all conditions of \Cref{ayalafrancis} into our \Cref{conduche}. The composition of profunctors appears naturally in Conduch\'e's theorem. The condition \ref{conduchecorollary5} in \Cref{conduchecorollary} carries similar information in the synthetic framework. Given its relevance, in this last section we explain why this is not yet a theorem.

The result in \Cref{ayalafrancis} is expressed and proved using correspondences between $ \infty $-categories. If we have categories $ \catC $ and $ \catD $, a \textbf{correspondence} from $ \catC$ to $\catD $ is category $ \catM $ which contains $\catC $ and $ \catD $ as full subcategories, it is equipped with a functor $ \pi:\catM\to \{0<1\} $ such that $ \catC= \pi^{-1}(0)$ and $ \catD= \pi^{-1}(1)$. While a \textbf{profuctor} from $\catC$ to $\catD$ is a functor $ P:\catC\times\catD^{op}\to \set $. There is a bicategorical equivalence between profuctors from $ \catC $ to $ \catD $ and correspondences from $ \catC $ to $ \catD $. Switching to the realm of $ \infty $-categories:

\begin{definition}
 A \textbf{correspondence} between $ \infty $-categories $ \catC $ and $ \catD $ is a pair of pullbacks:
 \[\begin{tikzcd}
 	\catC & \catM & \catD \\
 	{\{0\}} & {\{0<1\}} & {\{1\}}
 	\arrow[from=1-1, to=2-1]
 	\arrow[from=1-2, to=2-2]
 	\arrow[from=2-1, to=2-2]
 	\arrow[from=2-3, to=2-2]
 	\arrow[from=1-3, to=2-3]
 	\arrow[from=1-3, to=1-2]
 	\arrow[from=1-1, to=1-2]
 \end{tikzcd}\]
This is simply a functor between $ \infty $-categories $ \catM\to \{0<1\} $ with fibers $ \catC $ over $ 0 $ and $ \catD $ over $ 1 $.
\end{definition}

It is a well-known fact a profunctor $ P:\catC\times\catD^{op}\to \set $ can also be defined as a two-sided discrete fibration over $ \catC \times \catD $. This is a functor $ \catE\to \catC \times \catD $ which is a discrete Grothendieck fibration over $ \catD $ and a discrete Grothendieck opfibration over $ \catC $.

Taking into account the limitations of sHoTT, for us it would make sense to momentarily think of profunctors as two-sided discrete fibrations.  Let $ P:A \to B \to \universetyp $ a two-variable type family over Segal types $ A $ and $ B $. From \cite{riehlshulman}, we say that $ P $ is a \textbf{two-sided discrete fibration} if for all $ a:A $ and $ b:B $ the type families
\[ \lambda x.P(x,b):A \to \universetyp \text{ and } \lambda y.P(a,y):B \to \universetyp \]
are contravariant and covariant, respectively. The most famous two-sided discrete fibration over a Segal type $ B $ is the $ ``\homtyp" $ type family
\[ \lambda x.\lambda y.\homtyp_B(x,y): B \to B \to \universetyp. \]
More generally, let $ f:E \to B $ a function between Segal types, $ a, \, b:B $, and $ u:\homtyp_B(a,b) $ then
\[
\lambda x.\lambda y.\homtyp_E^u(x,y): E_a \to E_b \to \universetyp
\]
is a two-sided discrete fibration. The type $ \homtyp_E^u(x,y) $ denotes the type of arrows in $ E $ that start at $ x:E_a $ and end at $ y:E_b $.

Weinberger provides in \cite{weinbergertwo} the following characterization of two-sided discrete families:

\begin{proposition} \label{twosidedcharacterization}
	Given $ P:A \to B \to \universetyp $ a two-side type family over Rezk types, the following are equivalent:
	\begin{enumerate}
		\item The family $ P $ is a two-sided discrete fibration.
		\item The family $ P $ is cartesian over $ A $ and cocartesian over $ B $, and for all $ a:A, \, b:B $ the bifibers $ P(a,b) $ are discrete types. 
	\end{enumerate}
\end{proposition}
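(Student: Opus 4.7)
The plan is to reduce the two-sided statement to the well-known single-variable fact that, over a Rezk type, a cartesian family with discrete fibers is exactly a contravariant family (and dually, cocartesian with discrete fibers equals covariant). Granting this, both directions become essentially immediate after unfolding what "cartesian over $A$" and "cocartesian over $B$" mean for a two-variable family: namely, that for each $b:B$ the family $\lambda x.P(x,b):A\to\universetyp$ is cartesian, with the cartesian lifts varying functorially with $b$, and dually in the other variable.

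For the direction (1)$\Rightarrow$(2), suppose $P$ is a two-sided discrete fibration. Fix $b:B$; by definition $\lambda x.P(x,b)$ is contravariant, so in particular it is cartesian and its fibers $P(a,b)$ are discrete. As $b$ varies this gives the required cartesian structure of $P$ over $A$, and symmetrically, covariance of $\lambda y.P(a,y)$ for each $a$ yields cocartesianness over $B$. Discreteness of every bifiber $P(a,b)$ is a direct consequence of either of the two covariance/contravariance hypotheses. For the converse (2)$\Rightarrow$(1), fix $b:B$ and consider $\lambda x.P(x,b):A\to\universetyp$. By hypothesis this family is cartesian, and by hypothesis its fibers $P(a,b)$ are discrete; applying the single-variable characterization, it is therefore contravariant. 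The covariance of $\lambda y.P(a,y)$ is obtained by the same argument applied to the other coordinate, so $P$ is a two-sided discrete fibration.

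The main obstacle I expect is not either implication per se but rather pinning down that the two-variable notion "cartesian over $A$" as used in Weinberger's framework genuinely reduces, fiberwise in $B$, to the single-variable cartesian condition to which the characterization lemma applies; once the bifibers are discrete this compatibility is automatic, since the space of cartesian lifts over a given base arrow becomes a proposition and hence the $B$-naturality data is contractible. Consequently the whole proof is essentially a bookkeeping exercise packaging the single-variable equivalence (cartesian with discrete fibers $\Leftrightarrow$ contravariant, and its dual) twice, once in each variable.
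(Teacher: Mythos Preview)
The paper does not prove this proposition; it is quoted from Weinberger's \cite{weinbergertwo}, so there is no in-paper argument to compare against.

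Your reduction to the single-variable characterization (over a Rezk base, contravariant $\Leftrightarrow$ cartesian with discrete fibers, and dually covariant $\Leftrightarrow$ cocartesian with discrete fibers) is the right strategy, and both implications go through essentially as you outline. The only place that merits a crisper justification is the compatibility clause in (1)$\Rightarrow$(2): Weinberger's two-sided cartesian families carry, beyond fiberwise (co)cartesianness, the requirement that cocartesian transport along $B$ preserves cartesian arrows over $A$. Your appeal to cartesian lifts being propositional is in the right spirit but somewhat indirect; the cleanest way to discharge it is to note that when $\lambda x.P(x,b)$ is contravariant, \emph{every} dependent arrow over $A$ in that slice is already cartesian, so preservation of cartesianness under any construction whatsoever is vacuous. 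With that small sharpening, your argument is correct.
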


We have not introduced cartesian and cocartesian type families in sHoTT, this is the main topic of \cite{buchwein}. However, the meaning of such concepts is in  practice the same one as for $ \infty $-categories. Therefore, the first two conditions of the second point in \Cref{twosidedcharacterization} simply mean that the type families
\[ \lambda x.P(x,b):A \to \universetyp  \text{ and }  \lambda y.P(a,y):B \to \universetyp \]
are cartesian and cocartesian, respectively, for all $ a:A $ and $ b:B $ together with some compatibility condition. This is what \cite{weinbergertwo} defines as two-sided cartesian family. Given $ P:A \to B \to \universetyp $ and $ Q: B \to C \to \universetyp $ two two-sided type families, there is a natural composition to obtain another two-sided type family:
\[
Q \odot P \equiv \lambda a.\lambda c.\sum_{b:B} P(a,b)\times Q(b,c):A \to C \to \universetyp.
\]
The result in \cite[Proposition 5.5]{weinbergertwo} shows that if the families $ P $ and $ Q $ are two-sided cartesian then $ Q\odot P $ is again a two-sided cartesian family. Unfortunately, even if both $ P $ and $ Q $ are two-sided discrete fibrations it does not follow that $ Q\odot P $ is a two-sided discrete fibration. Instead, to make sense of the composition in this case we consider the discrete type completion of $ Q \odot P $.

If we have a function $ f:E \to B $ between Segal types, a condition we would like to add to \Cref{conduche} is the following: For any $ a,\, b,\, c: B, \, u:\homtyp_B(a,b), \, v:\homtyp_B(b,c)$ and $ x:E_a, \, z:E_c $ the canonical map induced by the composition
\[
\left(\sum_{y:E_b} \homtyp_E^u(x,y) \times \homtyp_E^v(y,z)\right) \to \homtyp_E^{v \circ u} (x,z)
\]
exhibits $ \homtyp_E^{v \circ u} (x,z) $ as the discrete type completion of \[ \sum_{y:E_b} \homtyp_E^u(x,y) \times \homtyp_E^v(y,z).\]

The problem arises because Condition \ref{conduchecorollary5} of \Cref{conduchecorollary} encodes the composition of correspondences in sHoTT. These are Segal (Rezk) types over $ \simplext $.  In \cite{stevenson} it is shown that correspondences from $ \catC $ to $ \catD $ are the same as $ \catC\times \catD^{op} \to \mathcal{S} $ where $ \mathcal{S} $ denotes the $ \infty $-category of spaces, and furthermore are the same as a \textit{bifibration}. This is done by endowing the category of correspondences $ Corr(\catC,\catD) $ and the category $ \sset/(\catC\times\catD) $ with model structures, respectively, such that they are Quillen equivalent and where the fibrant objects of $ \sset/(\catC\times\catD) $ are the bifibrations. Both of these models are Quillen equivalent to $ \sset/(\catC \times \catD^{op}) $ with the covariant model structure \textit{i.e.,} this encodes profunctors.

We venture to say that until sHoTT is further enhanced to be more expressive the analogous result from \cite{stevenson} is out of reach. By this we just mean we cannot yet establish a full precise relation between correspondences and two-sided discrete fibrations and profunctors.

	\bibliography{references-exp-functors}
	\bibliographystyle{alpha}
	
\end{document}